\numberwithin{equation}{section}
\newtheorem{theorem}{Theorem}[section]
\newtheorem{definition}[theorem]{Definition}
\newtheorem{proposition}[theorem]{Proposition}
\newtheorem{lemma}[theorem]{Lemma}
\newtheorem{corollary}[theorem]{Corollary}
\newtheorem{remark}[theorem]{Remark}
\newcommand{\nor}{\Arrowvert}
\def\e{{\varepsilon}}
\def\na{\nabla}
\def\g{{\gamma}}
\def\L{{\Lambda}}
\def\l{{\lambda}}
\def\a{{\alpha}}
\newcommand{\R}{\mathbb{R}}
\newcommand{\N}{\mathbb{N}}
\newcommand{\jcal}{{\mathcal{I}}}
\newcommand{\kcal}{{\mathcal{K}}}
\newcommand{\n}{\mathop{\textsc{n}}}
\newcommand{\const}{\mathop{\textsc{c}}}
\newcommand{\bsc}{\mathop{\textsc{b}}}
\newif\ifcomment \commentfalse
\def\commentON{\commenttrue}
\long\outer\def\BC#1\EC{\ifcomment \sloppy \par \# \ldots\dotfill
{\em #1} \dotfill \# \par \fi } \commentON
\newcommand{\remove}[1]{}
\begin{document}
\title[Nonradial solutions]{A nonradial bifurcation result with applications to supercritical problems}
\author[A.~L.~Amadori, F.~Gladiali]{Anna Lisa Amadori$^\dag$ and  Francesca Gladiali$^\ddag$}
\thanks{The authors are members of the Gruppo Nazionale per
l'Analisi Matematica, la Probabilit\'a e le loro Applicazioni (GNAMPA)
of the Istituto Nazionale di Alta Matematica (INdAM). The second author is supported by PRIN-2012-grant ``Variational and perturbative aspects of nonlinear differential problems''.}
\date{\today}
\address{$\dag$ Dipartimento di Scienze Applicate, Universit\`a di Napoli ``Parthenope", Centro Direzionale di Napoli, Isola C4, 80143 Napoli, Italy. \texttt{annalisa.amadori@uniparthenope.it}}
\address{$\ddag$ Matematica e Fisica, Polcoming, Universit\`a di Sassari, via Piandanna 4, 07100 Sassari, Italy. \texttt{fgladiali@uniss.it}}

\begin{abstract}
\noindent
In this paper we consider the problem
\[
\left\{\begin{array}{ll}
-\Delta u=|x|^{\a} F(u) & \hbox{ in }\R^{\n}\\
u>0& \hbox{ in }\R^{\n} 
\end{array}\right.
\]
where $\a>0$ and $\n\ge3$. Under some assumptions on $F$ we deduce the existence of nonradial solutions which bifurcate from the radial one when $\alpha$ is an even integer.

{\bf Keywords:} semilinear elliptic equations, bifurcation, nonradial solutions.%symmetry breaking, bifurcation

{\bf AMS Subject Classifications:} 35B32, 35J61.
\end{abstract}

\maketitle
\section{Introduction}\label{s1}
In this paper we consider the problem
\begin{equation}\label{1}
\left\{
\begin{array}{ll}
-\Delta u=|x|^{\a} F(u) & \hbox{ in }\R^{\n} ,\\
u>0& \hbox{ in }\R^{\n} ,
\end{array}\right.
\end{equation}
where $\alpha>0$ is a real parameter and $\n\ge 3$. Throughout the paper 
 $F$ is a function in  $ C^{1}[0,+\infty)$. We will consider the boundary conditions
\begin{equation}\label{bc1}
u(x)\to 0 \quad \hbox{ as }|x|\to +\infty.
\end{equation}
This condition is motivated for physical reasons, indeed one wants the Lagrangian associated to problem \eqref{1} to be finite. Instead of \eqref{bc1} one can impose that $u$ belongs to a suitable space, for example $H^1(\R^{\n})$ or $D^{1,2}(\R^{\n})$, depending on $F$. In most cases this implies that solutions  have an exponential decay at infinity,
%Problem \eqref{1} models different type of equations. When $F(s)=s^p$ with $p>1$ it is known as H\'enon problem and arises in the study of stellar clusters. Also non-homogeneous nonlinearities can be of interest, for instance $F(s)=s^p-s$. \\
and guarantees that \eqref{1} has only radial solutions for $\a=0$, see \cite{gnn} or \cite{DG}.
When $\a>0$ instead, the presence of the term $|x|^\a$ allows the existence of nonradial solutions. This phenomenon has been shown (in the case of a spherical bounded domain) in the seminal paper of \cite{SSW} for the H\'enon problem, $F(u)=u^p$ with $1<p<\frac{\n+2+2\alpha}{\n-2}$. See  \cite{AG} for some nonradial bifurcation results for the H\'enon problem in the unit ball, or \cite{BK14} for some more general nonhomogeneous nonlinearities. 
One of the few result in all of $\R^{\n}$ is \cite{GGN}, where the authors consider the critical H\'enon problem, i.e $F(u)=u^{\frac{\n+2+2\a}{\n-2}}$, and prove the existence of infinitely many nonradial bifurcation points.
Here we consider the more general problem \eqref{1} and  we show  the existence of nonradial solutions to \eqref{1} \eqref{bc1}, by using the bifurcation theory, which is a good tool that can be applied also in the supercritical case, i.e.~when $F(u)$ can growth faster that $u^{\frac{\n+2}{\n-2}}$ at infinity.

For what concerns radial solutions, problem  \eqref{1}, \eqref{bc1} is equivalent to the ordinary differential equation
\begin{equation}\label{1radtr}
- V^{\prime\prime}- \dfrac{k(\alpha)}{t} V^{\prime}= F(V) , \qquad  t>0 , 
\end{equation}
where 
\[ k(\alpha) =\frac{2\n-2+\a}{2+\a}\in(1,\n-1),\]
 with the boundary conditions
\begin{equation}\label{bc}
V'(0)=0\,,\qquad
\lim\limits_{t\to+\infty}V(t) =0.
\end{equation}
It is very easy, indeed, to prove next statement.
\begin{proposition}\label{p:radial} 
Assume that problem \eqref{1radtr}, \eqref{bc} admits a solution $V_{\alpha}$ for some $\a>0$. Then problem \eqref{1}, \eqref{bc1} has the radial solution
\begin{equation}\label{ualfa}
u_{\alpha}(x)= V_{\alpha}\left(\frac{2}{2+\alpha} |x|^{\frac{2+\alpha}{2}}\right).
\end{equation}
Moreover any radial solution of \eqref{1}, \eqref{bc1} gives rise to a solution to \eqref{1radtr}, \eqref{bc} by the formula \eqref{ualfa}.
\end{proposition}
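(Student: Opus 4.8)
The plan is to verify directly, by a change of variables, that the Emden–Fowler type substitution $t = \frac{2}{2+\alpha}|x|^{\frac{2+\alpha}{2}}$ transforms the radial part of the Laplacian in $\R^{\n}$ into the one-dimensional operator appearing in \eqref{1radtr}. Concretely, writing $r = |x|$ and setting $s(r) = \frac{2}{2+\alpha} r^{\frac{2+\alpha}{2}}$, one has $s'(r) = r^{\alpha/2}$, hence $s$ is a smooth increasing bijection of $(0,+\infty)$ onto itself. For a radial function $u(x) = V(s(r))$ I would compute
\[
\Delta u = u_{rr} + \frac{\n-1}{r} u_r = V''(s)\,(s')^2 + V'(s)\Big(s'' + \frac{\n-1}{r} s'\Big),
\]
and then check that $(s')^2 = r^{\alpha}$ and that $s'' + \frac{\n-1}{r}s' = \big(\frac{\alpha}{2} + \n - 1\big) r^{\alpha/2 - 1} = \frac{2\n-2+\alpha}{2}\, r^{\alpha/2-1} = \frac{k(\alpha)}{s}\,(s')^2$, using $s = \frac{2}{2+\alpha} r^{(2+\alpha)/2}$ so that $\frac{(s')^2}{s} = \frac{r^{\alpha}}{\frac{2}{2+\alpha}r^{(2+\alpha)/2}} = \frac{2+\alpha}{2} r^{\alpha/2 - 1}$. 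Therefore
\[
-\Delta u = -r^{\alpha}\Big(V''(s) + \frac{k(\alpha)}{s} V'(s)\Big) = |x|^{\alpha}\Big(-V''(s) - \frac{k(\alpha)}{s} V'(s)\Big),
\]
so that $-\Delta u = |x|^{\alpha} F(u)$ in $\R^{\n}\setminus\{0\}$ is equivalent to $-V'' - \frac{k(\alpha)}{t} V' = F(V)$ on $(0,+\infty)$ for $t = s(r)$.

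Having established this equivalence away from the origin, I would next match the boundary and regularity conditions. Since $s(r)\to +\infty$ as $r\to +\infty$, the condition $\lim_{t\to+\infty}V(t) = 0$ is exactly \eqref{bc1}; and since $s(r)\to 0$ as $r\to 0$, the condition $V'(0) = 0$ together with the continuity of $V$ ensures that $u_\alpha(x) = V_\alpha(s(r))$ extends continuously to $x = 0$. One direction of the proposition — given $V_\alpha$, the function $u_\alpha$ in \eqref{ualfa} solves \eqref{1}, \eqref{bc1} — then follows, provided one argues that the singularity at the origin is removable: the function $u_\alpha$ is bounded near $0$ and satisfies $-\Delta u_\alpha = |x|^\alpha F(u_\alpha)\in L^\infty_{loc}$ in $\R^{\n}\setminus\{0\}$, and a point (or a set of zero capacity, here a single point in dimension $\n\ge 3$) is a removable singularity for bounded weak/distributional solutions of such an equation. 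For the converse, given a radial solution $u$ of \eqref{1}, \eqref{bc1}, one sets $V_\alpha(t) = u(x)$ with $|x| = s^{-1}(t)$; the same computation run backwards shows $V_\alpha$ solves \eqref{1radtr} on $(0,+\infty)$, the decay \eqref{bc1} gives $V_\alpha(t)\to 0$, and the smoothness of $u$ at the origin forces $V_\alpha'(0^+) = 0$ (indeed $u$ radial and $C^1$ at $0$ gives $u_r(0) = 0$, and $u_r = V_\alpha'(s) s' = V_\alpha'(s) r^{\alpha/2}$, from which $V_\alpha'(0^+) = 0$ follows since $r^{\alpha/2}$ does not vanish the limit away).

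I expect the only genuine subtlety — the "main obstacle", though it is minor — to be the behavior at the origin: making precise in what sense $u_\alpha$ solves the PDE at $x=0$ (removable singularity argument) and, in the converse direction, transferring the regularity of $u$ at $0$ into the Neumann condition $V_\alpha'(0)=0$ when $\alpha$ is not an even integer and the change of variable $r\mapsto r^{(2+\alpha)/2}$ is only Hölder, not smooth. Everything else is the routine chain-rule computation sketched above. Given the paper's own remark that the statement is "very easy", I would keep the write-up short: present the change of variables, record the identity $-\Delta u = |x|^\alpha\big(-V'' - \frac{k(\alpha)}{t}V'\big)$, note the correspondence of boundary conditions, and dispatch the origin by citing standard removable-singularity results.
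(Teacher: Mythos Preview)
Your change-of-variables computation and the matching of the boundary condition at infinity are correct and coincide with the paper's approach. The forward direction (from $V_\alpha$ to $u_\alpha$) is fine; the paper simply declares it ``clear'' and does not even mention the removable-singularity point you raise.

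There is, however, a genuine gap in your converse direction, precisely at the place you yourself flag as the subtlety. You write that $u_r = V_\alpha'(s)\,r^{\alpha/2}$ and $u_r(0)=0$ force $V_\alpha'(0^+)=0$ ``since $r^{\alpha/2}$ does not vanish the limit away''. But $\alpha>0$, so $r^{\alpha/2}\to 0$ as $r\to 0$; the relation $u_r(0)=0$ therefore gives only the weaker information
\[
\lim_{t\to 0^+} t^{\frac{\alpha}{2+\alpha}}\,V_\alpha'(t)=0,
\]
which does not by itself exclude $V_\alpha'(t)\to c\neq 0$ or even a mild blow-up. The paper closes this gap by a short ODE argument: since $k(\alpha)>\frac{\alpha}{2+\alpha}$, the weaker limit above yields $t^{k}V_\alpha'(t)\to 0$; integrating $-(t^kV_\alpha')'=t^kF(V_\alpha)$ from $0$ then gives
\[
-t^kV_\alpha'(t)=\int_0^t s^kF(V_\alpha(s))\,ds,
\]
and dividing by $t^k$ and letting $t\to 0$ (the integrand being bounded) shows $V_\alpha'(0^+)=0$. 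You should replace your one-line claim by this computation.
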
 
This relation comes from the transformation 
\begin{equation}\label{cov}
 t=\dfrac{2}{2+\alpha} r^{(2+\alpha)/2},
\end{equation}
which maps any radial solution $u_{\alpha}$ of \eqref{1}, \eqref{bc1} to a solution $V_{\alpha}$ of \eqref{1radtr}, \eqref{bc}. %(See Section \ref{s2} for a discussion about the boundary conditions.)
The equivalence between the radial solutions of \eqref{1} and the solutions to \eqref{1radtr} applies to any type of solutions, positive or sign-changing, and holds for bounded domains also.
It has been already used in \cite{GGN} and \cite{GGN2}, and shall be exploited also here. 
When $k(\a)$ is an integer problem \eqref{1radtr}, \eqref{bc} is equivalent to find radial solutions for the autonomous problem
\begin{equation}\label{BLauto}\left\{\begin{array}{ll}
-\Delta v =F(v) &\hbox{ in }\R^k,\\
%v>0&\hbox{ in }\R^k,\\
v\to 0 &\hbox{ as } |x|\to \infty,
\end{array}\right.\end{equation}
where the new dimension $k$ is strictly less than $\n$. 
In this way, the transformation \eqref{cov} relates solutions to \eqref{1} to solutions to an homogeneous problem in a space of lower dimension, and so allows to obtain radial solutions when $F$ grows faster than $\frac{\n+2}{\n-2}$ at infinity but slower than $\frac{k+2}{k-2}$.

In addition,  here, we draw information about the bifurcation points in the curve $\a\mapsto u_{\a}$ of classical radial solutions to \eqref{1}, \eqref{bc1}.
By the implicit function theorem, the bifurcation values of $\a$ must satisfy the degeneracy condition
\begin{equation}\label{L}
%\left\{\begin{array}{ll} \displaystyle 
-\Delta w=|x|^{\a} F'(u_{\a})\,w \qquad  \hbox{ in }\R^{\n},
%\\\displaystyle \int_{\R^{\n}}|\na w|^2+|x|^{\a}w^2\, dx<\infty.\end{array}\right. 
\end{equation}
for some $w$ such that $|x|^{\alpha/2}w(x) ,|\na w(x)| \in L^2(\R^{\n})$.
%where $H^1(\R^{\n})$ is the usual Sobolev space and  $u_{\a}$ is a classical solution to \eqref{1} and \eqref{bc1}.\\
%for some nontrivial $w\in L^{\infty}(\R^{\n})$. \\
In general solving \eqref{L}, to detect if $u_{\a}$ is degenerate or not, is a very difficult problem. However we are able to solve it, and explicitly find the values ​​of $\alpha$ where the degeneration appears and the corresponding eigenfunctions, when  the solution $V_{\a}$ of \eqref{1radtr} has Morse index one.
To give  precise statements, we introduce the functional space related to the linearization of \eqref{1radtr}:
\begin{equation}\label{E} 
E=\left\{ V\in C^1(0,+\infty)\,:\, \int_0^{+\infty}t^k\left( (V')^2+V^2\right)\, dt<\infty\right\}. 
\end{equation}
Let us recall that the solution $V_\a$ of \eqref{1radtr}, \eqref{bc} has Morse index one if 
$$\inf\limits_{\begin{subarray}{c}{w\in E,}\\{w\neq 0}\end{subarray}}\frac{\int_0^{+\infty}t^k\left((w')^2-F'(V_{\a})w^2\right) dt}{ \int_0^{+\infty}t^kw^2\, dt}<0$$
and
$$\inf\limits_{\begin{subarray}{c}{W\subset E,}\\{dim(W)=2}\end{subarray}}\max_{\begin{subarray}{c}w\in W\\ w\neq 0\end{subarray}}\frac{\int_0^{+\infty}t^k\left((w')^2-F'(V_{\a})w^2\right) dt}{ \int_0^{+\infty}t^kw^2\, dt}\geq 0.$$
%where the  set $E$ has been introduced in \eqref{E}.
Moreover $V_\a$ is said nondegenerate if the linearized equation
\begin{equation}\label{kkk}
-w''-\frac{k}tw'=F'(V_{\a})w\qquad t>0
\end{equation}
does not have nontrivial solutions in $E$. If $V_\a$ is degenerate we let $n_\a$ be the dimension of the space of solutions to \eqref{kkk} in $E$. Obviously $n_\a=1$ or $2$. 

\begin{proposition}\label{lin}
Assume that, for some  $\a>0$, \eqref{1radtr} has a positive solution $V_{\a}$ with Morse index 1  and  $V_{\a}'\in E$. 
\begin{enumerate}[(i)]
\item 
If $V_{\a}$ is nondegenerate, then the linearized problem \eqref{L} 
has a nontrivial solution if and only if $\alpha=2 i$ is an even integer. Moreover the space of solutions of \eqref{L} has dimension $\frac{(\n+2i)(\n+i-2)!}{(\n-2)!(i+1)!}$ and it is spanned by
\[%\begin{equation}\label{sol0}
w_{i,h}(x) = V_{\alpha}'\left(\frac{2}{2+\alpha} |x|^{\frac{2+\alpha}{2}}\right)\, Y_{i+1,h}(\theta), \]%\end{equation}
where $Y_{j,h}$ are the spherical harmonic functions related to the $j^{th}$ eigenvalue of the Laplace-Beltrami operator in $S^{\n-1}$ ($S^{\n-1} $ denotes the $\n-1$-dimensional sphere.)
\item 
If $V_{\a}$ is  degenerate and $\a$ is not an even integer, then the space of solutions to 
the linearized problem \eqref{L} has dimension $n_\a$ and  it is spanned by 
\[w_0(x) = \psi\left(\frac{2}{2+\alpha} |x|^{\frac{2+\alpha}{2}}\right) , \]
where $\psi\in E$ is any solution of the problem \eqref{kkk}.
%\[
%- \psi^{\prime\prime}- \dfrac{k(\a)}{t}\psi^{\prime} = F'(V_{\a}) \psi  \qquad t>0.
%\]
\item 
Otherwise, if $V_{\a}$ is  degenerate and $\a=2i$ is an even integer, then the space of solutions has dimension $n_\a+\frac{(\n+2i)(\n+i-2)!}{(\n-2)!(i+1)!}$  and it is spanned by $w_0$ and $w_{i,h}$.
\end{enumerate}
\end{proposition}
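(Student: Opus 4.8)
The plan is to reduce the PDE eigenvalue problem \eqref{L} to the ODE \eqref{kkk} by separating variables in spherical coordinates, exploiting the change of variable \eqref{cov}. First I would write any candidate solution $w$ of \eqref{L} as a series $w(x)=\sum_{j\ge0}\sum_h \phi_{j,h}(r) Y_{j,h}(\theta)$ in spherical harmonics, where $-\Delta_{S^{\n-1}} Y_{j,h}=\mu_j Y_{j,h}$ with $\mu_j=j(j+\n-2)$. Plugging into $-\Delta w=|x|^\a F'(u_\a)w$ and using $u_\a(x)=V_\a\big(\tfrac{2}{2+\a}r^{(2+\a)/2}\big)$, each mode must satisfy
\[
-\phi_{j,h}''-\frac{\n-1}{r}\phi_{j,h}'+\frac{\mu_j}{r^2}\phi_{j,h}=r^\a F'(u_\a)\phi_{j,h},\qquad r>0.
\]
Applying the transformation \eqref{cov}, i.e.\ setting $\phi_{j,h}(r)=\zeta_{j,h}(t)$ with $t=\tfrac{2}{2+\a}r^{(2+\a)/2}$, a direct computation (the same one underlying Proposition \ref{p:radial}) turns this into
\[
-\zeta_{j,h}''-\frac{k(\a)}{t}\zeta_{j,h}'+\frac{c_j(\a)}{t^2}\zeta_{j,h}=F'(V_\a)\zeta_{j,h},\qquad t>0,
\]
for an explicit constant $c_j(\a)=\big(\tfrac{2}{2+\a}\big)^2\mu_j$; the crucial point is that $c_j(\a)$ vanishes precisely when... no: it is $\big(\tfrac{2}{2+\a}\big)^2 j(j+\n-2)$, which is zero only for $j=0$. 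So for $j\ge1$ the mode equation is \eqref{kkk} with an added singular potential $c_j(\a)/t^2$, while for $j=0$ it is exactly \eqref{kkk}.

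Next I would analyze when the $j$-th mode equation admits a nontrivial solution in the right space (the condition $|x|^{\a/2}w,|\na w|\in L^2(\R^\n)$ translates, mode by mode, into $\zeta_{j,h}\in E$ plus an integrability condition near $0$ coming from the $\mu_j/r^2$ term). For $j\ge1$ the key observation is that $V_\a'$ solves the equation obtained by differentiating \eqref{1radtr}: since $-\Delta$ applied to $V_\a'\big(\tfrac{2}{2+\a}r^{(2+\a)/2}\big)Y_{j,h}$ produces the potential term $c_j(\a)/t^2 - (\text{something})$, one checks that $w_{i,h}$ as defined in the statement \emph{does} solve \eqref{L} exactly when the singular coefficient matches, which forces the algebraic relation between $\mu_{i+1}$ and $\a$; working it out gives $\a=2i$ and pins down that the relevant harmonics are those of order $i+1$, with multiplicity $\dim\{Y_{i+1,h}\}=\frac{(\n+2i)(\n+i-2)!}{(\n-2)!(i+1)!}$. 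Conversely, I must show no \emph{other} mode contributes: for $j\neq i+1$ (and $j\ge1$) the equation with potential $c_j(\a)/t^2$ has no solution in the admissible space. This is where the Morse index $1$ hypothesis enters decisively — it guarantees that the operator $-\tfrac{d^2}{dt^2}-\tfrac{k}{t}\tfrac{d}{dt}-F'(V_\a)$ on $E$ has exactly one negative eigenvalue and (by nondegeneracy in case (i)) no kernel, so adding the positive potential $c_j(\a)/t^2$ with $c_j(\a)>c_{i+1}(\a)$ keeps the operator positive definite (no kernel), while for $c_j(\a)<c_{i+1}(\a)$, $j\ge1$, one uses that the first mode $j=1$ would otherwise produce a negative-direction contradiction with Morse index $1$; a careful monotonicity-in-$c$ argument (eigenvalues of $-\Delta+\frac{c}{t^2}\cdot$ are strictly increasing in $c$) rules out every $j\neq i+1$.

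For parts (ii) and (iii) I would combine the above with the $j=0$ analysis: the radial mode $j=0$ equation is precisely \eqref{kkk}, so its contribution to the kernel of \eqref{L} is exactly $n_\a$-dimensional, spanned by $w_0(x)=\psi\big(\tfrac{2}{2+\a}r^{(2+\a)/2}\big)$ — one must verify $w_0$ satisfies the $L^2$ integrability conditions, which is immediate since $\psi\in E$. When $\a$ is not an even integer, the argument of part (i) shows no $j\ge1$ mode contributes, giving (ii); when $\a=2i$, the $j=i+1$ modes contribute as in (i) and are linearly independent from the radial $w_0$ (different spherical harmonic content), giving the direct sum of dimensions in (iii). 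The main obstacle I anticipate is the exclusion step for $j\ge1$, $j\neq i+1$: making rigorous that the Morse-index-one assumption on $V_\a$ (a statement about the operator \emph{without} the $c_j/t^2$ term) propagates to invertibility of all the perturbed operators requires care with the functional-analytic setting on the weighted space $E$ and with the behavior of solutions at the singular endpoint $t=0$, where the indicial roots of $-\zeta''-\tfrac{k}{t}\zeta'+\tfrac{c_j}{t^2}\zeta=0$ must be computed and matched against the admissibility condition.
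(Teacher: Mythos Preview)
Your proposal is correct and follows essentially the same route as the paper: spherical-harmonic decomposition, the change of variable \eqref{cov} reducing each mode to \eqref{kkk} with an added singular term $c_j(\a)/t^2$, identification of $V_\a'$ as the distinguished solution that fixes $c=k(\a)$, and the algebraic matching $\mu_{i+1}=\big(\tfrac{2+\a}{2}\big)^2 k(\a)$ forcing $\a=2i$. The paper packages your exclusion step (``no other $j\ge1$ contributes'') as a weighted-eigenvalue statement: it shows that $u_\a$ is degenerate iff $\lambda=-c_j(\a)$ is an eigenvalue of the singular problem $-v''-\tfrac{k}{t}v'=F'(V_\a)v+\tfrac{\lambda}{t^2}v$ in $E$, and then invokes a separate lemma (Lemma \ref{x1}) proving that Morse index one of $V_\a$ forces this weighted problem to have exactly one negative eigenvalue---via orthogonality of weighted eigenfunctions and the quadratic form, rather than your monotonicity-in-$c$ viewpoint. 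The two arguments are equivalent; the paper's formulation avoids the indicial-root analysis you flagged as an obstacle.
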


An interesting consequence of Proposition \ref{lin} is the computation of the Morse index of the radial solution $u_{\a}$. 
\begin{corollary}\label{cor1}
Assume that \eqref{1radtr} has a positive solution $V_{\a}$ with Morse index 1. Then the Morse index of $u_{\alpha}$ %changes when $\alpha$ is an even integer, according to the formula
is given by %{\\\textcolor{red}{Lisa qui ho cambiato un attimo la formula, guarda se ti va bene!!\\}}
\[ m(\alpha) = \sum_{0\leq i<1+\alpha/2}\frac{(\n+2i-2)(\n+i-3)!}{(\n-2)!i!}
%\[ m(\alpha) = \sum_{i=0}^{1+[\alpha/2]}\frac{(\n+2i-2)(\n+i-3)!}{(\n-2)!i!}   
% {{\n+[\alpha/2]}\choose{\n}}
,\]
where $i$ runs through the set of integer numbers.
\end{corollary}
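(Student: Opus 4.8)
The plan is to diagonalise the linearisation of \eqref{1} at $u_\a$ along spherical harmonics and then to recognise each angular block, via the change of variables \eqref{cov}, as a controlled perturbation of the linearisation of \eqref{1radtr}. By definition, $m(\a)$ is the maximal dimension of a subspace on which the quadratic form $w\mapsto\int_{\R^\n}\big(|\na w|^2-|x|^\a F'(u_\a)w^2\big)\,dx$ is negative definite, on $\{w:\ |x|^{\a/2}w,|\na w|\in L^2(\R^\n)\}$. Since the potential $|x|^\a F'(u_\a)$ is radial, writing $w=\phi(r)Y_j(\theta)$ with $Y_j$ a spherical harmonic of degree $j$ splits this form into the orthogonal sum of the blocks $\int_0^{+\infty}r^{\n-1}\big(|\phi'|^2+j(j+\n-2)r^{-2}|\phi|^2-r^\a F'(u_\a)|\phi|^2\big)\,dr$. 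As the degree-$j$ harmonics have multiplicity $N_j=\frac{(\n+2j-2)(\n+j-3)!}{(\n-2)!\,j!}$, one gets $m(\a)=\sum_{j\ge0}N_j\,m_j$ with $m_j$ the negative index of the $j$-th block; since $\sum_{0\le i<1+\a/2}N_i$ is precisely the sum in the statement, it suffices to show $m_j=1$ for $j<1+\a/2$ and $m_j=0$ otherwise.

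A direct computation shows that, under \eqref{cov}, the $j$-th block becomes a positive multiple of
\[ Q_\L(\psi)=\int_0^{+\infty}t^k\Big((\psi')^2+\frac{\L}{t^2}\psi^2-F'(V_\a)\psi^2\Big)\,dt,\qquad\psi\in E, \]
at $\L=\L_j:=\dfrac{4j(j+\n-2)}{(2+\a)^2}$, with $k=k(\a)$; hence $m_j$ is the negative index of $Q_{\L_j}$ on $E$. For $j=0$ this is exactly the form in the definition of the Morse index of $V_\a$, so $m_0=1$. For $\L\ge0$ one has $Q_\L(\psi)=Q_0(\psi)+\L\int_0^{+\infty}t^{k-2}\psi^2\,dt$ with $\int t^{k-2}\psi^2\ge0$, so by the min--max principle every eigenvalue of $Q_\L$ (relative to $\int t^k\psi^2\,dt$) is nondecreasing in $\L$; in particular the second eigenvalue of $Q_{\L_j}$ is $\ge$ that of $Q_0$, which is $\ge0$, whence $m_j\le1$ for all $j$.

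Differentiating \eqref{1radtr} shows that $\psi=V_\a'$ solves $-\psi''-\frac kt\psi'+\frac k{t^2}\psi=F'(V_\a)\psi$, so $V_\a'\in E$ is a $0$-eigenfunction of the operator attached to $Q_k$, giving $Q_k(V_\a')=0$ and hence
\[ Q_{\L_j}(V_\a')=(\L_j-k)\int_0^{+\infty}t^{k-2}(V_\a')^2\,dt, \]
the integral being strictly positive and finite (near $0$ because $V_\a'(0)=0$ and $k>1$, near $+\infty$ because $V_\a'\in E$). An elementary manipulation gives $\L_j<k\iff 4j(j+\n-2)<(2\n-2+\a)(2+\a)\iff j<1+\a/2$, the left-hand side being increasing in $j$ with equality at $j=1+\a/2$. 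Thus for $j<1+\a/2$ the function $V_\a'$ makes $Q_{\L_j}$ negative, so $m_j=1$; and for $j\ge1+\a/2$ one has $\L_j\ge k$, so by the monotonicity in $\L$ it suffices to check that the bottom eigenvalue of $Q_k$ equals $0$. Since $V_\a'$ is a $0$-eigenfunction this eigenvalue is $\le0$, and it is $\ge0$ as soon as $V_\a'$ keeps a constant sign, for then $-V_\a'>0$ is a positive eigenfunction of $Q_k$, hence its ground state, of eigenvalue $0$.

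It remains to prove that $V_\a'$ does not vanish in $(0,+\infty)$; this is the main obstacle and again follows from Morse index one. If $V_\a'(t_1)=0$ for some $t_1\in(0,+\infty)$, let $\phi_1$ and $\phi_2$ be $V_\a'$ restricted to $(0,t_1)$, resp.\ to $(t_1,+\infty)$, and extended by $0$ (mollified so as to lie in $E$). Multiplying $-\psi''-\frac kt\psi'+\frac k{t^2}\psi=F'(V_\a)\psi$, with $\psi=V_\a'$, by $t^kV_\a'$ and integrating over each of the two intervals — the boundary terms vanishing because $V_\a'(t_1)=0$, $t^k\to0$ at $0$, and $V_\a',V_\a''$ decay at $+\infty$ — gives $Q_0(\phi_1)=-k\int_0^{t_1}t^{k-2}(V_\a')^2\,dt<0$ and $Q_0(\phi_2)=-k\int_{t_1}^{+\infty}t^{k-2}(V_\a')^2\,dt<0$. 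As $\phi_1$ and $\phi_2$ have disjoint supports, $Q_0$ is negative definite on $\mathrm{span}\{\phi_1,\phi_2\}$, so $V_\a$ would have Morse index $\ge2$, a contradiction. Hence $V_\a'\neq0$ on $(0,+\infty)$, and since $V_\a(0)>0=\lim_{t\to+\infty}V_\a$ necessarily $V_\a'<0$ there. Collecting the four steps, $m_j=1$ precisely for $0\le j<1+\a/2$, so $m(\a)=\sum_{0\le i<1+\a/2}N_i$, which is the asserted formula. \cvd
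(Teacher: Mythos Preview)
Your proof is correct and follows essentially the same route as the paper: spherical-harmonic decomposition, the change of variables \eqref{cov}, and the identification of $V_\a'$ as the eigenfunction governing each angular block. The only difference is packaging: the paper passes through the weighted eigenvalue problems \eqref{LA}, \eqref{Larmtr} via Lemmas \ref{x1} and \ref{x2} in the Appendix, whereas you work directly with the quadratic forms $Q_{\L_j}$ and supply self-contained arguments (monotonicity in $\L$, and the sign of $V_\a'$ from Morse index one) for what those lemmas provide.
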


The result of Proposition \ref{lin} can be used to prove some existence results for solutions of problem \eqref{1} in a bounded domain, see \cite{GG} as an example, or in the study of some perturbed equations.\\
Now, provided that problem \eqref{1radtr} has a curve of Morse index one solutions $V_\a$ for $\a\in I\subset (0,+\infty)$, then problem \eqref{1} has a curve of radial solutions $u_\a$, whose  Morse index changes when $\alpha$ is an even integer and goes to $+\infty$ as $\a\to + \infty$.
This  change in the Morse index yields nonradial solutions, by using  topological methods based on Leray-Schauder degree theory, as in Krasnoselski \cite{K64} or in the global bifurcation result of Rabinowitz \cite{R}.
A crucial point in doing that is the choice of the functional space: it is needed a compact operator in a Banach space that contains the radial solutions. 
Compactness does not hold trivially here, because we are working in an unbounded domain  and  with a possibly supercritical  nonlinearity. It can be recovered by picking a suitable weighted space that may vary depending on the nonlinear term $F$. To go further,  we need to formulate some assumptions on $F$. In any case, a similar bifurcation result could be obtained for other types of nonlinearities, by choosing another suitable functional space. 
Here we assume that   
\begin{equation}
\label{Fin0}
F(0)=0\,,\,F'(0)=-m< 0.
\end{equation}
%\lim\limits_{u\to 0^+} \dfrac{F(u)}{u}  =-m < 0.
%\footnote{\textcolor{blue}{A lighter assumption, namely $-\infty<\liminf\limits_{u\to 0^+} \dfrac{F(u)}{u} \le \limsup\limits_{u\to 0^+} \dfrac{F(u)}{u}  =-m < 0$ should be sufficient. Though, it would increase technical difficulties without giving a sensitive improvement. For this reason we have chosen the easiest way.}}
%
\remove{
Let us give some definitions. For every $g\in L^{\infty}(\R^{\n})$ we define the weighted norm
\begin{equation}\label{norm}
\| g\|_{*}: =\sup _{x\in \R^{\n}}e^{|x|}|g(x)|
\end{equation} 
and we define $L^{\infty}_{*}(\R^{\n}):=\{g\in L^{\infty}(\R^{\n}) \hbox{ such that } e^{|x|}g(x)\in L^{\infty}(\R^{\n}) \}.$ Finally we let
\begin{equation}\label{X} X =D^{1,2}(\R^{\n}) \cap L^{\infty}_*,
\end{equation}
where $D^{1,2}(\R^{\n})$ is the completion of $L^{2^*}(\R^{\n})$ under the norm $\Arrowvert v\Arrowvert _{1,2}=\left( \int_{\R^{\n}}|\nabla v|^2\, dx \right)^{\frac 12} $. 
$X$ is a Banach space.
We consider on $X$ the following  norm
\[ \| v\|_{X}=\max\left\{ \|\nabla v\|_{L^2} \, , \, \| v\|_{*}\right\} .\]
}%
In this way, the  solution to \eqref{1radtr} has more than exponential decay at infinity (see Lemma \ref{morethanexpV}), and the radial solution $u_{\a}$ to \eqref{1} belong to
\begin{equation}\label{X} X = \{ v\in D^{1,2}(\R^{\n}) \, : \, e^{|x|} v(x) \in L^{\infty}(\R^{\n})\},
\end{equation}
where $D^{1,2}(\R^{\n})$ is the completion of $L^{2^*}(\R^{\n})$ under the norm $\Arrowvert v\Arrowvert _{1,2}=\left( \int_{\R^{\n}}|\nabla v|^2\, dx \right)^{\frac 12} $. 
It is clear that $X$, endowed with the norm
\[ \| v\|_{X}=\max\left\{ \|\nabla v\|_{L^2} \, , \, \| e^{|x|} v(x)\|_{\infty}\right\} ,\]
is a Banach space. 
Our main result reads as
\begin{theorem} \label{teo1}
Assume that \eqref{Fin0} holds, and that the problem \eqref{1radtr} with the boundary conditions \eqref{bc} has a  nondegenerate, Morse index one, solution  $V_\a$ %\in E$ (such that $V_\a'\in E $) 
for any $\a$ in an open interval $I\subset (0,+\infty)$. 
Let $\a_i=2i\in I$ with $i\in\mathbb{N}$. Then, \\
i) If $i$ is even, there exists at least a continuum of nonradial solutions to \eqref{1}, $O(\n-1)$-invariant, bifurcating from the pair $(\a_i,u_{\a_i})$ in $I\times X$.\\
ii) If $i$ is odd,  there exist at least $\big[\frac {\n}2\big]$ continua of nonradial solutions to
\eqref{1} bifurcating from the pair $(\a_i,u_{\a_i})$ in $I\times X$. Each branch is invariant w.r.t.~the action of the group $O(h)\times O(\n-h)$ for $h=1,2,\dots  \left[\frac {\n}2\right]$.\\
Moreover all these solutions have exponential decay, in the sense that
\[\limsup _{|x|\to +\infty} e^{|x|}v(x)<+\infty.\]
Finally the bifurcation is global and the Rabinowitz alternative holds.
\end{theorem}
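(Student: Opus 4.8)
The plan is to rewrite \eqref{1}--\eqref{bc1} as a fixed point equation for a compact operator on $X$, to recentre it on the radial branch $\a\mapsto u_\a$, and then to apply the global bifurcation theorem of Rabinowitz \cite{R} inside suitable $O(\n)$-symmetric subspaces of $X$; Proposition \ref{lin} will locate the degenerate values of $\a$, and Corollary \ref{cor1} will guarantee that the relevant Leray--Schauder index actually changes there. I expect the main obstacle to be purely functional-analytic: producing the compact operator in the unbounded domain $\R^\n$ and in the presence of a possibly supercritical $F$.

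By \eqref{Fin0} one writes $F(u)=-mu+\widetilde F(u)$ with $\widetilde F(0)=\widetilde F'(0)=0$, so \eqref{1} becomes $-\D u+m|x|^\a u=|x|^\a\widetilde F(u)$, where $L_\a:=-\D+m|x|^\a$ is a positive operator — this is exactly where $F'(0)<0$ enters, since $m|x|^\a$ is then a confining potential. Setting $S_\a(u):=L_\a^{-1}\big(|x|^\a\widetilde F(u)\big)$ one has that $u$ solves \eqref{1}--\eqref{bc1} iff $S_\a(u)=u$; the work is to show that $S_\a\colon X\to X$ is well defined, compact, and continuous in $(\a,u)$. Here the embedding $X\hookrightarrow L^\infty(\R^\n)$ is decisive: $\widetilde F(u)$ is bounded for every $u\in X$ no matter how fast $F$ grows, so the supercritical case is harmless; since $\widetilde F\in C^1$ and $\widetilde F(0)=0$, one has $|\widetilde F(u)|\le C_M|u|$ for $|u|\le M$, so $|x|^\a\widetilde F(u)$ inherits the exponential decay carried by $X$, and a barrier argument against $e^{-|x|}$ (note $L_\a e^{-|x|}>0$ for $|x|$ large) gives $S_\a(u)\in X$; compactness then follows from interior elliptic estimates and Rellich's theorem on compact sets, combined with the uniform control of the exponential tails on bounded subsets of $X$ (which handles both the weighted $L^\infty$ and the $D^{1,2}$ parts of $\|\cdot\|_X$). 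I would also check, using Lemma \ref{morethanexpV}, Proposition \ref{p:radial} and the change of variables \eqref{cov}, that $u_\a\in X$ for $\a\in I$ and that $\a\mapsto u_\a$ is $C^1$ from $I$ into $X$ — the implicit function theorem applying to \eqref{1radtr}--\eqref{bc} in $E$ because $V_\a$ is nondegenerate.

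Next I would recentre: with $v=u-u_\a$ the problem reads $v=\G_\a(v):=S_\a(u_\a+v)-u_\a$, with $\G_\a$ compact, $\G_\a(0)=0$, $\G_\a'(0)=S_\a'(u_\a)$ continuous in $\a$. A short computation gives $I-\G_\a'(0)=L_\a^{-1}\big(-\D-|x|^\a F'(u_\a)\big)$, so its kernel in $X$ is precisely the solution space of \eqref{L}. By Proposition \ref{lin}(i) (which applies since $V_\a'\in E$ by the decay of $V_\a$), $1\in\sigma(\G_\a'(0))$ exactly when $\a=\a_i=2i$, where $\ker(I-\G_{\a_i}'(0))=\mathrm{span}\{w_{i,h}\}$; through the fixed radial factor $V_{\a_i}'$ this space is isomorphic to the space $\mathcal H_{i+1}$ of degree-$(i+1)$ spherical harmonics on $S^{\n-1}$, of dimension $\tfrac{(\n+2i)(\n+i-2)!}{(\n-2)!(i+1)!}$. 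Since the number of eigenvalues of $\G_\a'(0)$ exceeding $1$ equals the Morse index $m(\a)$, Corollary \ref{cor1} shows that exactly $\dim\mathcal H_{i+1}$ of them cross $1$ as $\a$ crosses $\a_i$ (and none does at other $\a\in I$); moreover, by the same mode-by-mode decomposition, for a closed subgroup $G\le O(\n)$ the subspace $X^G=\{v\in X: v\circ g=v\ \forall g\in G\}$ (which is $S_\a$-invariant because $|x|^\a$ is radial) sees exactly $\dim\mathcal H_{i+1}^G$ of these crossings, where $\mathcal H_{i+1}^G$ denotes the $G$-invariant part of $\mathcal H_{i+1}$.

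It remains to pick $G$ so that $\dim\mathcal H_{i+1}^G$ is odd. A standard computation with Gegenbauer, respectively Jacobi, polynomials yields $\dim\mathcal H_j^{O(\n-1)}=1$ for every $j\ge0$, while $\dim\mathcal H_j^{O(h)\times O(\n-h)}=1$ if $j$ is even and $=0$ if $j$ is odd. Taking $j=i+1$: if $i$ is even, $G=O(\n-1)$ gives crossing number $1$; if $i$ is odd, $G=O(h)\times O(\n-h)$ gives crossing number $1$ for each $h=1,\dots,\left[\frac{\n}{2}\right]$ (the values $h$ and $\n-h$ yielding conjugate groups, whence the count). In each such case the Leray--Schauder index of $0$ for $\G_\a$ in $X^G$ changes sign across $\a_i$, so Rabinowitz's theorem produces in $I\times X^G$ a closed connected continuum of nontrivial solutions of $v=\G_\a(v)$ emanating from $(\a_i,0)$ which is either unbounded or returns to the trivial branch at some other $(\a_j,0)$; reading back through $u=u_\a+v$ gives the asserted global continua of $G$-invariant solutions of \eqref{1} bifurcating from $(\a_i,u_{\a_i})$ in $I\times X$, and membership in $X$ is exactly the stated decay $\limsup_{|x|\to+\infty}e^{|x|}v(x)<+\infty$. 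Finally, near $(\a_i,0)$ the bifurcating $v$ is, to leading order, a nonzero multiple of an element of $\mathrm{span}\{w_{i,h}\}$, which carries the nonconstant harmonic $Y_{i+1,h}$ (degree $i+1\ge1$), so the continuum is not contained in the radial set and, by the Rabinowitz alternative, consists of nonradial solutions; and for odd $i$ the $\left[\frac{\n}{2}\right]$ continua are pairwise distinct because the generator of $\mathcal H_{i+1}^{O(h)\times O(\n-h)}$ is a nonconstant polynomial in $x_1^2+\dots+x_h^2$, hence not invariant under $O(h')\times O(\n-h')$ for $h'\notin\{h,\n-h\}$.
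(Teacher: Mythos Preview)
Your approach is essentially the paper's: the same operator $(-\Delta+m|x|^{\alpha})^{-1}\big(|x|^{\alpha}f(\cdot)\big)$ on $X$, compactness via exponential barriers (cf.\ Lemmas \ref{Tdef} and \ref{piucheesp}), restriction to $O(\n-1)$- or $O(h)\times O(\n-h)$-invariant subspaces where the kernel becomes one-dimensional by Smoller--Wasserman \cite{SW,SW90}, and the odd Morse-index jump from Corollary \ref{cor1} triggering Rabinowitz's global alternative.

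One step, however, is not complete: the pairwise distinctness of the $\left[\frac{\n}{2}\right]$ continua when $i$ is odd. You argue that the generator of $\mathcal H_{i+1}^{O(h)\times O(\n-h)}$ is not $O(h')\times O(\n-h')$-invariant; this shows only that the tangent directions at $(\alpha_i,u_{\alpha_i})$ differ, hence that the continua are \emph{locally} distinct, but it does not preclude two global continua from merging away from the bifurcation point. The paper's argument is global and sharper: by \cite[Lemma 6.3]{SW90}, any function simultaneously invariant under $G_{h_1}$ and $G_{h_2}$ with $h_1\neq h_2$ must be radial; since each continuum lives entirely in its own $X^{G_h}$ and consists of nonradial solutions (by the radial nondegeneracy of $u_\alpha$ coming from Proposition \ref{lin}), no two of them can share a point.
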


Theorem \ref{teo1} implies that a branch of nonradial bifurcating solutions either is unbounded in $I \times X$, or  it meets the boundary $\partial I \times X$, or it connects to another branch. 
The last two occurrences  give a multiplicity result for solutions to \eqref{1}. 
We believe that, at least for some values of $\a$, these branches live for fixed $\alpha$ and are unbounded, see \cite{GM}. %\textcolor{red}{precisare referenza!}

Theorem \ref{teo1} shows that the structure of solutions to \eqref{1} is much more complex than the case $\a=0$ and the presence of the term $|x|^{\a}$ produces nonradial solutions. These solutions arise  when $\a$ is an even number, whatever the nonlinear term is.
So far we state our results for problem \eqref{1} assuming that the dependence on the parameter $\a$ is only in the radial term $|x|^{\a}$. But we can allow the nonlinearity $F$ to depend on $\a$ too. Indeed the results of Propositions \ref{p:radial}, \ref{lin} and Corollary \ref{cor1} follow exactly in the same way since the transformed problem \eqref{1radtr} already depends on $\a$. The nonradial bifurcation result in Theorem \ref{teo1} can be extended to this case assuming, instead of \eqref{Fin0} that $F$ is $C^1(0,\infty)$ with respect to the parameter $\a$ and satisfies $F(\a,0)=0$ and $D_uF(\a,0)=-m_{\a}\leq -m<0$ for any $\a\in I$ and substituting in the proof $m$ with $m_\a$.
Some more knowledge of the function $F$ is actually needed to investigate the Morse index and the degeneracy of $V_{\a}$, in order to apply Theorem \ref{1}. 
In Section \ref{s4} we give some examples of effective nonlinearities  to which our bifurcation result applies, and deduce existence of nonradial solutions in subcritical and supercritical settings.
The rest of the paper is devoted to proofs. Section \ref{s2} is focused on radial solutions via problem \eqref{L}, and contains the proofs of  Propositions \ref{p:radial}, \ref{lin} and Corollary \ref{cor1}. The bifurcation result  Theorem \ref{teo1} is proved in Section \ref{s3}. Some more technical details are postponed to the Appendix.

\section{Some applications}\label{s4}

The statements of Propositions \ref{p:radial}, \ref{lin} and Theorem \ref{teo1}  are quite general, and they do not rely upon restrictive assumptions on the nonlinearity $F$.
In the following we single out some settings where they can actually be applied. 

\subsection{The case $F(u)=u^p-u$} 
In \cite{K}, Kwong, proved that problem \eqref{1radtr}, \eqref{bc} has a unique positive solution $V_\a$ for every $1<p<\frac{k+3}{k-1}$, i.e.  for every $\a > \max \{0,\frac{(\n-2)p-(\n+2)}{2}\}$, since $k(\a)=\frac{2\n-2+\a}{2+\a}$.
Then Proposition \ref{p:radial} yields that the problem
\begin{equation}\label{modello}
\left\{\begin{array}{ll}
-\Delta u=|x|^{\a}(u^p-u) & \hbox{ in }\R^{\n}\\
u>0& \hbox{ in }\R^{\n}\\
u\to 0&\hbox{ as }|x|\to +\infty
\end{array}\right.
\end{equation}
with $\n\geq 3$ and $p>1$
has unique radial solution $u_\a$ for every $\a> \max \{0,\frac{(\n-2)p-(\n+2)}{2}\}$. Further, $V_{\a}$ and $V_{\a}'$ have exponential decay so that belong to space $E$ (see Lemma \ref{morethanexpV}). These solutions $V_\a$ can be found by the Mountain Pass Theorem and so their Morse index is at most one. Let us consider the linearized equation of \eqref{1radtr} at this solution $V_\a$, i.e. 
\begin{equation}\label{lin-modello}
-v''-\frac ktv'=(pV_\a^{p-1}-1)v
\end{equation}
where $v\in E$.
It can be easily seen, differentiating equation \eqref{1radtr}, that the first eigenvalue of the linearized equation \eqref{lin-modello} is $-k<0$, so that $V_\a$ is a Morse index one solution to \eqref{1radtr} and \eqref{bc} for any $\a>\max \{0,\frac{(\n-2)p-(\n+2)}2\}$.\\ 
Moreover the second eigenvalue is strictly positive since it has be shown in \cite{K} that all the solutions to \eqref{lin-modello} are unbounded. This implies that $V_\a$ is nondegenerate for any admissible $\a$ and has Morse index one.\\
% and $V_{\a}$ is nondegenerate and has Morse index one, in the sense stated precisely in Section \ref{s2}.\\
In the subcritical or critical case ($1<p\le\frac{\n+2}{\n-2}$ fixed), Theorem \ref{teo1} implies that problem \eqref{modello} has a curve of radial solution $u_{\a}$ for every $\a\geq 0$, and that for every {\em even } $\a$ the point $(\a, u_{\a})$ is a nonradial bifurcation point and a global branch of nonradial solutions exists. When $\a$ is even but is not divisible by $4$ we find $\left[\frac {\n}2\right]$ different global branches of nonradial solutions bifurcating from $(\a, u_{\a})$.
In the subcritical case the existence of the radial solution $u_{\a}$ is a standard result, but we are able to prove existence of nonradial solutions and multiplicity results showing that the structure of solutions to \eqref{modello} is much more complex than the case $\a=0$. When the exponent $p$ is critical also the existence of the radial solution $u_{\a}$ is a new result.\\ 
When the exponent $p$ is supercritical ($p>\frac{\n+2}{\n-2}$ fixed), then problem \eqref{modello} has a curve of radial solution $u_{\a}$ for every $\a> \frac{(\n-2)p-(\n+2)}{2}$. Again we can apply Theorem \ref{teo1}
getting one global branch of nonradial solutions that bifurcate from  the radial solution $(\a, u_{\a})$ when  $\a>\frac{(\n-2)p-(\n+2)}{2}$ is an even number
and $\left[\frac {\n}2\right]$ different global branches of nonradial solutions
 when $\a$ is even but not divisible by $4$.
Thus the transformation \eqref{cov} and Theorem \ref{teo1} yield existence of both radial and nonradial solutions for \eqref{modello} when $p>1$ is any number and $\a$ is large enough depending on $p$.
Existence results with supercritical exponents are usually very difficult to prove.

%These solutions can be found by the Mountain Pass Theorem and so their Morse index is at most one.
%It can be easily seen, differentiating equation \eqref{1radtr} that the first eigenvalue of \eqref{Larmtr} is $-ks<0$, so that $V_\a$ is a Morse index one solution to \eqref{1radtr} and \eqref{bc} for any $\a>\frac{(\n-2)p-(\n+2)}2$.
%Moreover the second eigenvalue is strictly positive since it has be shown in \cite{K} that all the solutions to \eqref{Larmtr} corresponding to $\l=0$ are unbounded. This implies that $V_\a$ is nondegenerate for any admissible $\a$.\\
%From this we get that the curve $(\a,u_\a)$ of radial solutions of \eqref{1} in the space $(\frac{(\n-2)p-(\n+2)}2,+\infty)\times X$.
%We can apply then Theorem \ref{teo1} getting that every $\a_i>\frac{(\n-2)p-(\n+2)}2$ is a nonradial bifurcation point for problem \eqref{1}, \eqref{bc1}.

\subsection{The case  $F(u)=u^{\frac{N +2+2\a-\e}{N-2}}-u$}
It is possible to let the nonlinear term $F$ depend on $\a$. For instance, we can take  $F(u)=u^{\frac{\n +2+2\a-\e}{\n-2}}-u$, where $\e$ is a fixed positive number (such that $\e<4$) so that $F(u)$ %. Let us remark that this nonlinearity 
is supercritical when $\a>\frac{\e}2$. We can apply again the existence results of Kwong, see \cite{K}, getting that problem \eqref{1radtr}, \eqref{bc} has a unique positive solution $V_\a$ for every $\a\geq 0$.\\
%Applying again the result by Kwong we get that problem \eqref{1radtr}, with the boundary conditions  $\lim_{t\to +\infty}V(t)=0$   and $V'(0)=0$, has a unique radial solution $V_\a$  for every $\a>0$. 
As before $V_\a$ is a Morse index one, nondegenerate solution that belongs to the space $E$. Then  Proposition \ref{p:radial} implies that the supercritical problem
\begin{equation}\label{modello2}
\left\{\begin{array}{ll}
-\Delta u=|x|^{\a}(u^{\frac{\n +2+2\a-\e}{\n-2}} -u) & \hbox{ in }\R^{\n}\\
u>0& \hbox{ in }\R^{\n}\\
u\to 0&\hbox{ as }|x|\to +\infty
\end{array}\right.
\end{equation}
has unique radial solution $u_\a$ for every $\a> 0$.\\
We can apply Theorem \ref{teo1} and we get  a global branch of nonradial solutions that  bifurcate from $(\a, u_{\a})$ that when $\a$ is an even number and $\left[\frac {\n}2\right]$ different global branches of nonradial solutions
 when $\a$ is even but not divisible by $4$.\\
Again Proposition \ref{p:radial} and Theorem \ref{teo1} produce existence and multiplicity results for a supercritical nonlinear problem.

\subsection{Other nonlinearities}
The general bifurcation result applies also to a class of nonhomogeneous nonlinearities, which extends the case $F(u)=u^{p}-u$, and has been widely analyzed, mainly in the framework of Schrodinger equation or autonomous elliptic problems like \eqref{BLauto} (for instance \cite{BL-ARMA}).
In addition to \eqref{Fin0}, we take that $F$ increases like a power at infinity:
\begin{align}
\label{Fcresce}
\limsup\limits_{u\to+\infty}\frac{| F(u)|}{|u|^p} & = \ell < \infty ,
\end{align}
for some $p>1$, and that %Another hypothesis is needed to get existence of a radial solution, namely  
\begin{align}
\label{Fnozero}
\mbox{there exists $s>0$ such that} \int_0^{s} F(u) du >0.
\end{align}
To ensure uniqueness of the radial solution, we choose the setting of \cite{KZ91}, i.e. 
\begin{align}
\label{F2}
\begin{array}{c} \mbox{there exists $\theta >0$ such that} \; F(u)  < 0  \mbox{ for } u <\theta  \mbox{ and } F(u)>\theta  \mbox{ for } u >\theta 
\\
\mbox{ and } F^\prime(u) \le 0  \mbox{ in a neighborhood of } u=0.
\end{array}
\\
\label{F3} 
\begin{array}{c} \mbox{Let $\phi$ such that } \int_0^\phi F(u) du =0, \mbox{and } G(u)= u F^\prime(u)/F(u):
\\
\mbox{in } [\phi,+\infty), \; G(u) \mbox{ is nonincreasing and there exists } \lim\limits_{u\to+\infty} G(u)=\lambda \ge 1,
\\
\mbox{in } [\theta,\phi), \; G(u) \ge G(\phi) , \; \mbox{ and in } [0,\theta), \; G(u) \le \lambda.
\end{array} 
\end{align}
Slightly different hypotheses should guarantee uniqueness as well (see \cite{McL93}). In any case, we are able to include nonlinearities of ``polynomial'' type like
\[F(u) = u^p + \sum\limits_{1< q<p} c_q u^q -m\,u,\]
with $p>1$, $m>0$, and some restriction on the coefficients $c_q$ (see \cite[Section 4]{KZ91}).
Other functions that match assumptions \eqref{Fin0}, \eqref{Fcresce}--\eqref{F3} are 
\begin{align*}
F(u) = & \max\{ u^p , u^q\} -m\,u ,
\intertext{ with $p>q>1$ and $m>0$, and}
F(u) = & \dfrac{u^q}{1+u^{q-p}} - m\,u,
\end{align*} 
at least for $p>q\ge (p+3\sqrt{m})/(1+3\sqrt{m})>1$ and $m>0$.

In this setting all the  hypotheses of Theorem \ref{teo1} are satisfied.
First, problem \eqref{1radtr}, \eqref{bc} has an unique positive radial solution for all values of $\alpha > \max\left\{ 0 \, , \, \frac{(\n-2) p-\n-2}{2} \right\}$. When $k(\a)$ is an integer, a solution  exists by the well known results for the autonomous elliptic problem \eqref{BLauto}, relying on a constrained minimization method (see \cite[Theorem 1]{BL-ARMA}). Their arguments need some refinements to handle the general case $k(\a)>1$, and thus provide a continuum of solutions (paramerized by $\alpha$).

\begin{proposition}\label{p2.2}
Under assumptions \eqref{Fin0}, \eqref{Fcresce}--\eqref{F3}  problem \eqref{1radtr}, \eqref{bc} has an unique positive solution $V_\a$ for any $\a > \max\left\{ 0 \, , \, \frac{(\n-2) p-\n -2}{2} \right\}$.
%Moreover there exist $\delta, \const>0$ such that $|V_{\alpha}(t)|\le \const t^{-\frac{k}{2}} e^{-\delta t}$. In addition $V_\a$  is nondegenerate and has  Morse index one. 
\end{proposition}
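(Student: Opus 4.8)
The plan is to recover existence by the change of variable $k=k(\a)$, treating $k$ as a continuous ``dimension'' parameter ranging over $(1,\n-1)$, and to adapt the constrained minimization argument of Berestycki--Lions (see \cite[Theorem 1]{BL-ARMA}) to noninteger $k$. Concretely, for a fixed admissible $\a$, set $k=k(\a)>1$ and look for $V\in E$ minimizing the ``kinetic'' functional $T(V)=\frac12\int_0^{+\infty}t^k (V')^2\,dt$ subject to the constraint $\int_0^{+\infty}t^k \widehat F(V)\,dt=1$, where $\widehat F(s)=\int_0^s F(u)\,du$ is the primitive; hypotheses \eqref{Fin0}, \eqref{Fnozero}, \eqref{Fcresce} are exactly the radial-ODE analogues of the Berestycki--Lions conditions, so the constraint set is nonempty (by \eqref{Fnozero} and a scaling argument) and the functional is bounded below. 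A Lagrange multiplier together with a Pohozaev-type identity (obtained by multiplying \eqref{1radtr} by $tV'$ and integrating, which is what dictates the value of $k(\a)$ and the admissibility range) then produces, after rescaling, a genuine solution of \eqref{1radtr}, \eqref{bc}. One must check that the minimizer is positive (replace $V$ by $|V|$, which does not increase $T$ and keeps the constraint, and then invoke the strong maximum principle / Hopf lemma for the operator $-\frac{1}{t^k}(t^k V')'$), that $V'\in E$ (differentiate the equation, as is done for the examples in Section \ref{s4}), and that $V\to 0$ at infinity with the decay needed for $V\in E$ — this last point uses \eqref{Fin0}, i.e.\ $F'(0)=-m<0$, which forces the linearization at infinity to be $-V''-\frac ktV'+mV\approx 0$ and hence exponential decay (cf.\ Lemma \ref{morethanexpV}).

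For the compactness needed to pass to the limit in the minimizing sequence, I would work in the space of radial functions with the weight $t^k$ and exploit that, along a minimizing sequence, one has uniform bounds on $\int t^k (V_n')^2$ and on $\int t^k \widehat F(V_n)$; concentration at $t=0$ is ruled out by the weight $t^k$ with $k>1$ (the measure $t^k\,dt$ gives no mass to the origin), and vanishing/dichotomy at infinity is excluded by the subcriticality encoded in \eqref{Fcresce} relative to the effective dimension and by the sign condition \eqref{Fnozero}, exactly as in \cite{BL-ARMA}. This is the step that requires the ``refinements to handle the general case $k(\a)>1$'' alluded to in the text: the radial Sobolev-type and Strauss-type embeddings must be written for the measure $t^k\,dt$ with real $k$ rather than integer $k$, and the endpoint $k\to 1^+$ (where $k$ is close to being a one-dimensional problem with no decay mechanism from the dimension) is where I expect the analysis to be most delicate — here the term $-mu$ in $F$, i.e.\ assumption \eqref{Fin0}, is essential to retain coercivity and the exponential decay.

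For uniqueness I would invoke the Kwong--Zhang theory \cite{KZ91} (and the variant \cite{McL93}): assumptions \eqref{F2}--\eqref{F3} are designed precisely so that their ODE uniqueness results apply to $-V'' - \frac ktV' = F(V)$ with the generalized dimension $k=k(\a)>1$. The argument there is a Sturm-comparison / shooting argument comparing two putative solutions $V_1,V_2$, controlling the number of zeros of $V_1-c\,V_2$ and of $\partial_c V$ along the shooting flow; the monotonicity of $G(u)=uF'(u)/F(u)$ in \eqref{F3} is what makes the comparison work, and nothing in it requires $k$ to be an integer. So the uniqueness half is essentially a citation once the existence half has put us in the right functional setting; the main obstacle throughout remains the noninteger-$k$ compactness and decay analysis for the existence part, and in particular getting uniform-in-$\a$ control so that the solutions form a continuum in $\a$ rather than merely existing for each fixed $\a$.
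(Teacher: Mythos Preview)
Your outline is correct and follows exactly the paper's route: constrained minimization \`a la \cite{BL-ARMA} for the functional $\int_0^\infty t^{k(\a)}(V')^2\,dt$ under the constraint $\int_0^\infty t^{k(\a)}G(V)\,dt=1$, a Ni-type radial decay estimate and a weighted Sobolev inequality for noninteger $k$ to recover compactness (via Strauss), positivity by the strong maximum principle, and uniqueness as a direct citation of \cite{KZ91}. The only items the paper supplies that you do not name are the explicit inequality $\int_0^\infty t^k|V|^{p_\a+1}\,dt\le C\big(\int_0^\infty t^k(V')^2\,dt\big)^{(p_\a+1)/2}$ (with $p_\a=\frac{\n+2+2\a}{\n-2}$, proved from the pointwise bound $|V(t)|\le C\,t^{-(k-1)/2}\|V'\|$) and a Kelvin-transform lemma giving boundedness of $V$ at $t=0$, which is what secures the boundary condition $V'(0)=0$; your concerns about the endpoint $k\to 1^+$ and about uniform-in-$\a$ control are not needed for the proposition as stated.
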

A  sketch of the proof is reported in the Appendix.

\begin{remark}\label{morseetc}
By  Lemma \ref{morethanexpV},  $V_{\a} , V_{\a}'\in E$.
On the other hand the  Morse index  of  $V_\a$ is  equal to one, because it has been produced as  a minimum, constrained on a manifold with co-dimension one (see \cite[Remark 2.12]{E}). Actually it is easily seen that $V_{\a}'$ is an eigenfunction, and that the value of the first eigenvalue of \eqref{Larmtr} is $-k(\a)<0$.
Next, an eventual  nontrivial solution  to \eqref{Larmtr} corresponding to $\lambda=0$ should have only one zero. Then it could not vanish  at infinity by \cite[Lemma 9]{KZ91}. For this reason the solution $V_{\a}$ is nondegenerate in $E$.
\end{remark}

Now Proposition \ref{p:radial} ensures that the problem \eqref{1} has unique radial solution $u_\a$ for every $\a> \max \{0,\frac{(\n-2)p-(\n+2)}{2}\}$, and that such solution is in the space $X$. Moreover Proposition \ref{lin}\emph{(i)}, Corollary \ref{cor1} and Theorem \ref{teo1} apply. In particular, when the parameter $p$ appearing in \eqref{Fcresce} is supercritical (i.e.~$p>\frac{\n+2}{\n-2}$), we get the well-posedness of problem \eqref{1} in the class of positive radial solutions for any $\alpha>\frac{(\n-2)p-(\n+2)}{2}$, and existence of branches of nonradial solutions that bifurcate at any even value of $\alpha$.

\section{Radial solutions}\label{s2}

We focus here on radial solutions, and exploit the links between radial solutions to \eqref{1} and solutions to \eqref{1radtr}--\eqref{bc}.
For the sake of completeness, we begin by proving Proposition \ref{p:radial}.

\begin{proof}[Proof of Proposition \ref{p:radial}] 
It is clear that, if \eqref{1radtr} has a solution $V_\a$ that satisfies \eqref{bc}, then \eqref{1} has the radial solution $u_\a$ defined in \eqref{ualfa}.
Viceversa, let $u_\a$ be a radial solution of \eqref{1}. Then the function $V_\a(t)=u_\a(r)$, where $t$ and $r$ are related by \eqref{cov}, satisfies \eqref{1radtr} with the boundary conditions
\begin{equation}\label{bc0}\nonumber
\lim_{t\to 0^+}t^{\frac{\a}{2+\a}}V_\a'(t)=0\,,\quad \lim\limits_{t\to+\infty}V_\a(t) =0.
\end{equation}
Then
$\lim_{t\to 0^+}t^{k}V_\a'(t)=0$ since $k-\frac{\a}{2+\a}>0$ by the definition of $k$. Integrating \eqref{1radtr} we have
$$-t^kV_\a'(t)=\int_0^t s^k F(V_\a(s))\, ds$$
and this implies that 
$$\lim_{t\to 0^+}V_\a'(t)=-\lim_{t\to 0^+}\frac{ \int_0^t s^k F(V_\a(s))\, ds}{t^k}=0.$$ 
Therefore $V_\a$ is a solution to \eqref{1radtr}--\eqref{bc}.
\end{proof}

We next address to the linearization of problem \eqref{1} around the radial solution $u_{\a}$ produced in Proposition \ref{p:radial}, namely we study problem \eqref{L} and  prove Proposition \ref{lin} and Corollary \ref{cor1}.
\begin{proof}[Proof of Proposition \ref{lin}]
We investigate  the degeneracy of $u_{\a}$  by decomposing an eventual solution $w$ to \eqref{L} according to the spherical harmonics $Y_i(\theta)$ and write
\[w(x)=\sum\limits_{i=0}^{\infty} w_i(|x|) Y_i(\theta).\]
Now  $w$ is a nontrivial solution to \eqref{L} if and only if any non-null $w_i$ has the summability
\[\int _0^{+\infty}r^{\n-1}(w_i')^2+\int _0^{+\infty} r^{\n-1+\a}w_i^2\, dr <\infty\]
and solves
\begin{equation}\label{Larm}
- w^{\prime\prime}_i - \dfrac{\n-1}{r}w^{\prime}_i + \dfrac{\mu_i}{r^2} w_i = r^{\alpha} F'(u_{\alpha}) w_i , 
\end{equation}
where $\mu_i=i(\n-2+i)$ is the $i^{th}$ eigenvalue of the Laplace Beltrami operator on the $(\n-1)$-dimensional sphere and has multiplicity $\n_i=\frac{(\n+2i-2)(\n+i-3)!}{(\n-2)!i!}$, see for example \cite{SW}.
The change of variable \eqref{cov} transforms problem \eqref{Larm} into
\[%\begin{equation}
- v^{\prime\prime}_i - \dfrac{k}{t}v^{\prime}_i = F'(V_{\alpha}) v_i - \dfrac{4\mu_i}{(2+\alpha)^2 t^2} v_i 
\]%\end{equation}
for $v_i\in E$.
Hence $u_{\alpha}$ is degenerate if and only if $\lambda=-4\mu_i/(2+\alpha)^2\le 0$ is a nonpositive eigenvalue for the singular weighted eigenvalue problem 
\begin{equation}\label{Larmtr}
- v^{\prime\prime}- \dfrac{k}{t}v^{\prime} = F'(V_{\a}) v +\dfrac{\lambda}{t^2} v\qquad t>0 
\end{equation}
in $E$. Lemma \ref{x1} in the Appendix gives that the singular weighted eigenvalue problem \eqref{Larmtr} has only one negative eigenvalue $\l<0$ with eigenfunction in $E$.
Let us check that its value is $-k$: indeed, deriving equation \eqref{1radtr} w.r.t.~$t$ gives that
$W=V'_{\alpha}$ satisfies
\[
- W^{\prime\prime}- \dfrac{k}{t} W^{\prime}= F'(V_{\alpha}) W- \dfrac{k}{t^2} W \qquad t>0 
\]
and $W\in E$ by assumption.
So, if $0$ is not an eigenvalue for \eqref{Larmtr}, the linearized problem \eqref{L} has nontrivial solution if and only if there exists $i$ such that $4\mu_i/(2+\alpha)^2=k$, or equivalently
\[ i(\n-2+i) = \left(1+\frac{\alpha}{2}\right) \left(\n-1+\frac{\alpha}{2}\right), \]
which means that $\alpha=2(i-1)$ for some integer $i\ge 2$. 
Otherwise, if also $0$ is an eigenvalue for \eqref{Larmtr}, another type of nontrivial solutions to \eqref{L} shows up: the radial ones coming from $\mu_i=0$ i.e. the functions  $w_0(x)$. The presence of these radial solutions does not depend by the value of $\alpha$.
\end{proof}
%%%
\begin{proof}[Proof of Corollary \ref{cor1}]
Lemma \ref{x2} in the Appendix allows to compute the Morse index of $u_\a$ by counting  the negative eigenvalues with weight:
\begin{equation}\label{LA}
-\Delta w=|x|^{\a} F'(u_{\a}) \, w + \dfrac{\lambda}{|x|^2} w \qquad   \hbox{ in }\R^{\n} 
\end{equation}
with the summability condition $|x|^{\alpha/2} w, |\nabla w| \in L^2(\R^{\n})$.
Using as before the decomposition along spherical harmonics and the change of variable \eqref{cov} we have that 
every negative  eigenvalue of \eqref{LA} has to be of type $\lambda= \frac{4\mu_i}{(2+\alpha)^2}-k(\a)$, and that its multiplicity is $\n_i$.
Hence the Morse index of $u_{\alpha}$ can be computed by counting all index $i$ with $\frac{4\mu_i}{(2+\alpha)^2}< k(\a)$, each with its multiplicity. The thesis follows by recalling the formula for $k(\a)$. 
\end{proof}
\begin{remark}\label{r1}
{\rm Reasoning as in the proof of Proposition \ref{lin} and Corollary \ref{cor1} it is easy to see that any eigenfunction of \eqref{LA} corresponding to a negative eigenvalue $\l$ can be written in the following way
$$w(x)= V_\a'(|x|)Y_{j,h}(\theta)$$
where $Y_{j,h}$ are the spherical harmonics related to the $j$-th eigenvalue. }
\end{remark}

\section{Proof of the bifurcation result}\label{s3}
\remove{
It is clear that $X$, endowed with the norm
\[ \| v\|_{X}=\max\left\{ \|\nabla v\|_{L^2} \, , \, \| e^{|x|} v(x)\|_{\infty}\right\} ,\]
is a Banach space. 
\\
We recall
\begin{definition} \label{def1}
%Let $u_{\a}$ be the radial solution of \eqref{1} defined in \eqref{ualfa}. 
We say that a nonradial bifurcation occurs at $(\overline\a,u_{\overline{\a}})$ if in every neighborhood of $(\overline\a,u_{\overline{\a}}) $ in $(0,+\infty)\times X$ there exists a point  $(\a,v_\a)$ with $v_\a$ nonradial solution of  \eqref{1}.
\end{definition}
}
In this section we will prove Theorem \ref{teo1}.
Under assumption  \eqref{Fin0}, any solution $V_{\a}$ of \eqref{1radtr}, \eqref{bc} have an exponential decay at infinity, together with its first derivative $V_{\a}'$(see Lemma \ref{morethanexpV} in the Appendix). This implies that the functions $V_{\a}$ and $V'_{\a}$ belong to the space  $E$.
Then a curve of solutions $V_{\a}$ (for $\alpha\in I$) gives rise to a curve of radial solutions $u_\a\in X$ for problem \eqref{1} \eqref{bc1}, and any $u_\a$ is radially nondegenerate, provided that  $V_\a$ is nondegenerate, thanks to Propositions \ref{p:radial} and \ref{lin}.
%is satisfied and that problem \eqref{1radtr}  with the boundary conditions \eqref{bc} has a nondegenerate Morse index one solution $V_\a$ for any $\a$ in an open subset $ I\subset (0,+\infty)$.\\
%Under these assumptions problem \eqref{1}, \eqref{bc} admits a curve of radial solutions $u_\a$ for $\a\in I$. This curve belongs to the space $X$ defined in \eqref{X}.
%This assertion follows from the fact that solutions of \eqref{1radtr} and \eqref{bc} have an exponential decay at infinity, see Lemma \ref{morethanexpV} in the Appendix.\\
Let $\a_i=2i$ with $i\in \N$. We want to prove that, if $\a_i\in I$ for some $i\in \N$ then $(\a_i, u_{\a_i})$ is a nonradial bifurcation point. To this end we extend $F$ to a function from $\R\to \R$ in an odd way, introduce a new function 
\[  f (s)= F(s)+m \, s , \quad s\in \R,\]
and define the operator 
\[ T: I\times X \to X , \qquad T(\alpha,v)=\left(-\Delta +m |x|^{\alpha}I\right)^{-1} (|x|^{\alpha}f(v)).\]
We recall that the parameter $m$ and the set $X$  have been defined, respectively,  in \eqref{Fin0} and \eqref{X}.
In that way,  $T(\alpha,v)=z$ if and only if $z$ solves
\begin{equation}
\label{Teq} \left\{\begin{array}{ll}
-\Delta z +m|x|^{\alpha}z = |x|^{\alpha}f(v) & \text{ in } \R^{\n}\\
z\to 0 & \text{ as } |x|\to \infty
\end{array}\right.
\end{equation}
and a solution $v$ of problem \eqref{1} and \eqref{bc1} is a fixed point for the operator $T$, i.e it satisfies $T(\alpha,v)=v$. \\
Let us check, first, that the operator $T(\alpha,\cdot)$ is well defined and compact, for fixed $\alpha>0$.

\begin{lemma}\label{Tdef} Assume \eqref{Fin0}.
For all $\alpha>0$, $T(\alpha,\cdot)$ is a compact operator on $X$.
\end{lemma}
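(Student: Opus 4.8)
The plan is to write $T(\a,\cdot)$ as the composition of the superposition operator $v\mapsto g:=|x|^{\a}f(v)$ with the linear solution operator of \eqref{Teq}, and to derive every required property from two facts. First, each $v\in X$ obeys $|v(x)|\le\|v\|_{X}e^{-|x|}$. Second, by \eqref{Fin0} the function $f(s)=F(s)+ms$ (extended to an odd $C^{1}$ function on $\R$) satisfies $f(0)=f'(0)=0$, so, putting $\omega(\rho):=\sup_{|s|\le\rho}|f'(s)|$, one has $\omega(\rho)\to0$ as $\rho\to0^{+}$ and $|f(s)|\le\omega(|s|)\,|s|$ for all $s$ (mean value theorem). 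Combining these, for $v\in X$ one gets $|g(x)|\le C_{\|v\|_{X}}\big(1+|x|^{\a}\big)e^{-|x|}$ on $\R^{\n}$ (near the origin $|x|^{\a}$ is bounded because $\a>0$, and $f(v)$ is bounded because $v\in L^{\infty}$) and, more precisely, $e^{|x|}|g(x)|\le\|v\|_{X}\,|x|^{\a}\,\omega\big(\|v\|_{X}e^{-|x|}\big)$ for $|x|\ge1$. In particular $g\in L^{p}(\R^{\n})$ for every $p\in[1,+\infty]$, and $g\in L^{(2^{*})'}(\R^{\n})$.

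I would first settle solvability of \eqref{Teq}. Let $\mathcal H$ be the completion of $C^{\infty}_{c}(\R^{\n})$ under $\|w\|_{\mathcal H}^{2}=\int_{\R^{\n}}\big(|\na w|^{2}+m|x|^{\a}w^{2}\big)\,dx$; it embeds continuously in $D^{1,2}(\R^{\n})\subset L^{2^{*}}(\R^{\n})$. Since $g\in L^{(2^{*})'}$, the functional $w\mapsto\int g\,w$ is bounded on $\mathcal H$, so Lax--Milgram produces a unique weak solution $z\in\mathcal H$ of \eqref{Teq} with $\|\na z\|_{L^{2}}\le\|z\|_{\mathcal H}\le C\|g\|_{(2^{*})'}$; moreover any solution of \eqref{Teq} lying in $X$ automatically lies in $\mathcal H$ (because $e^{|x|}z\in L^{\infty}$ makes $|x|^{\a}z^{2}$ integrable), so $T$ is unambiguously defined. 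Because $g\in L^{p}_{\mathrm{loc}}$ for all $p<\infty$ and $m|x|^{\a}\in L^{\infty}_{\mathrm{loc}}$, elliptic regularity gives $z\in C^{1}_{\mathrm{loc}}(\R^{\n})$, the equation holding pointwise a.e., and a bound for $\|z\|_{L^{\infty}(B_{R})}$ in terms of $\|g\|_{L^{\infty}(B_{2R})}$ and $\|\na z\|_{L^{2}}$.

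The decisive step is the exponential decay, obtained by comparison exploiting that the zeroth-order coefficient $m|x|^{\a}\to+\infty$. Fix $R_{0}$ with $mR_{0}^{\a}\ge4$. For $R\ge R_{0}$, the function $De^{-2(|x|-R)}$ is a positive supersolution of $(-\D+m|x|^{\a})\,\cdot\,=0$ on $\{|x|\ge R\}$, while $(-\D+m|x|^{\a})\big(Be^{-|x|}\big)\ge\tfrac{m}{2}B|x|^{\a}e^{-|x|}$ there. Choosing $D=\sup_{|x|=R}|z|$ and $B=\tfrac{2}{m}\sup_{|y|\ge R}\big(|g(y)|e^{|y|}|y|^{-\a}\big)$, the barrier $De^{-2(|x|-R)}+Be^{-|x|}$ dominates $|g|$ on $\{|x|\ge R\}$ and dominates $|z|$ on $\{|x|=R\}$, and it and $z$ both vanish at infinity; since $-\D+m|x|^{\a}$ satisfies the maximum principle on exterior domains (its zeroth-order term being positive), one obtains
\[
|z(x)|\ \le\ \Big(\sup_{|x|=R}|z|\Big)e^{-2(|x|-R)}+\frac{2}{m}\Big(\sup_{|y|\ge R}|g(y)|e^{|y|}|y|^{-\a}\Big)e^{-|x|},\qquad |x|\ge R\ge R_{0}.
\]
With $R=R_{0}$ this yields $e^{|x|}z\in L^{\infty}$, hence $z\in X$ and $T(\a,\cdot):X\to X$. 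Feeding the resulting bound $\sup_{|x|=R}|z|\le C_{M}e^{-R}$ back into the inequality, together with $\sup_{|y|\ge R}|g(y)|e^{|y|}|y|^{-\a}\le M\,\omega(Me^{-R})$, gives the \emph{uniform} tail estimate: for every $\e>0$ there is $R_{\e}$ with $e^{|x|}|T(\a,v)(x)|<\e$ whenever $\|v\|_{X}\le M$ and $|x|\ge R_{\e}$. This is the key point, and it is exactly where the sign condition $F'(0)<0$ (equivalently $f'(0)=0$) is genuinely used.

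Finally, compactness. Take $\|v_{n}\|_{X}\le M$, set $z_{n}=T(\a,v_{n})$ and $g_{n}=|x|^{\a}f(v_{n})$. By the above, $\{z_{n}\}$ is bounded in $\mathcal H\hookrightarrow D^{1,2}$, has uniform interior $C^{1}$ bounds on each ball (since $\{g_{n}\}$ is bounded in every $L^{p}(B_{R})$), and satisfies the uniform tail estimate. By Arzel\`a--Ascoli a subsequence converges in $C^{1}_{\mathrm{loc}}$, and the uniform tail bound upgrades this to convergence in the $\|e^{|x|}\cdot\|_{L^{\infty}}$ norm. For the $D^{1,2}$ part, pass also to a weak limit $z_{n}\rightharpoonup z$ in $\mathcal H$; then $\|\na(z_{n}-z)\|_{L^{2}}^{2}\le\|z_{n}-z\|_{\mathcal H}^{2}=\int_{\R^{\n}}g_{n}(z_{n}-z)\,dx+o(1)$, and splitting the integral over $B_{R}$ and $\R^{\n}\setminus B_{R}$ and using Rellich on $B_{R}$ together with the equi-integrability at infinity of $\{g_{n}\}$ (immediate from $|g_{n}|\le C_{M}(1+|x|^{\a})e^{-|x|}$) shows the right-hand side tends to $0$ as $n\to\infty$ and then $R\to\infty$; hence $z_{n}\to z$ in $D^{1,2}$, so $z_{n}\to z$ in $X$. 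Thus $T(\a,\cdot)$ sends bounded sets to relatively compact ones; continuity follows by the same barrier argument applied to $z_{n}-z$ together with dominated convergence for $v\mapsto|x|^{\a}f(v)$, so $T(\a,\cdot)$ is a compact operator on $X$. The only genuinely delicate part of the program is the two–scale barrier and the \emph{uniform} smallness of the tail of $e^{|x|}z$ over bounded sets of data; the rest (Lax--Milgram, interior elliptic estimates, Rellich, Arzel\`a--Ascoli) is routine.
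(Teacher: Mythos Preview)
Your argument is correct and follows a genuinely different route from the paper's. The paper constructs $z$ by exhaustion on balls $B_n$, then controls all the $z_n$ by a single radial majorant $\zeta$ solving \eqref{eq:z}, and defers the crucial fact that $e^{|x|}\zeta(x)\to 0$ to Lemma~\ref{piucheesp}, whose proof is an explicit Bessel--function computation after the change of variable \eqref{cov}. You instead produce $z$ directly by Lax--Milgram in the weighted space $\mathcal H$ and obtain the exponential decay by the two–scale barrier $De^{-2(|x|-R)}+Be^{-|x|}$ on $\{|x|\ge R\}$; choosing first $R$ large so that the $B$--term (controlled by $M\omega(Me^{-R})$) is small, and then $|x|$ large relative to that $R$ so that the $D$--term (controlled by $C_Me^{R}e^{-|x|}$) is small, gives the uniform tail estimate without any special--function analysis. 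This is more elementary and more robust, since it does not rely on reducing to a radial ODE. The paper's approach, on the other hand, packages the decay into a single comparison function and yields in passing a slightly finer asymptotic (genuinely $o(e^{-|x|})$ via the Bessel representation). Both proofs close the $D^{1,2}$ convergence by the same energy identity, and both obtain convergence of $e^{|x|}z_n$ by combining local convergence with the uniform tail. One small point you pass over quickly is the justification of the comparison on the exterior domain when $z$ is a priori only in $\mathcal H$; this is handled by testing the inequality against $(z-w)^+\mathbf 1_{\{|x|>R\}}$, which lies in $\mathcal H$ because $z,w\in\mathcal H$ and $(z-w)^+$ vanishes on $\{|x|=R\}$, so no pointwise decay of $z$ need be assumed in advance.
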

\begin{proof}
We begin by checking that the operator $T$ is well defined on $X$, i.e.~that equation \eqref{Teq} is well-posed.
A solution can be produced by a classical exhaustion argument: 
take $z_n$ the solution to the standard elliptic Dirichlet problem in a ball of radius $n$
\begin{equation}\label{eq}
\left\{\begin{array}{ll}
-\Delta z_n +m|x|^{\alpha}z_n = |x|^{\alpha}f(v), \qquad & \text{in } B_n , \\
z_n=0 , & \text{on } \partial B_n.
\end{array}\right.
\end{equation}
We undertake that $z_n=0$ outside $B_n$. 
Let us denote $\bsc=\|v\|_X$ and  take  $g$ a modulus of continuity for $f(u)/u$ on $[-\bsc,\bsc]$, i.e.~a continuous and nondecreasing function with $g(0)=0$ and
\[ |f(u)| \le g(|u|) \, |u| \qquad \mbox{ for all } |u|\le\bsc.\]
Such function $g$ exists in virtue of assumption  \eqref{Fin0}.
Next, let $\zeta_n$ be the positive radial solution to
\[
\left\{\begin{array}{ll}
-\Delta \zeta_n +m|x|^{\alpha}\zeta_n = \bsc  |x|^{\alpha} e^{-|x|} g\left(\bsc e^{-|x|}\right), \qquad & \text{in } B_n ,\\
\zeta_n=0 , & \text{on } \partial B_n.
\end{array}\right.
\]
Comparison principle yields  that $ |z_n|\le \zeta_n$.
To obtain uniform estimates  at infinity, we consider the radial positive solution to the global problem
\begin{equation}\label{eq:z}%\begin{equation*}\tag{\ref{eq:z}}
\left\{\begin{array}{ll}
-\Delta \zeta +m|x|^{\alpha}\zeta =  \bsc  |x|^{\alpha} e^{-|x|} g\left(\bsc e^{-|x|}\right) , \qquad & \text{in } \R^{\n} ,\\
\zeta(x)\to 0 , & \text{as } |x|\to\infty.
\end{array}\right.
\end{equation}
It is clear (by comparison) that $\zeta_n\le\zeta$ and therefore by Lemma \ref{piucheesp}
\begin{equation}\label{dec-exp}
 |z_n|\le \const e^{-|x|},
\end{equation} 
where the constant $\const$ does not depend on $n$.
Estimate \eqref{dec-exp} implies, using \eqref{eq} that
 $z_n$ is uniformly bounded in $ {\mathcal D}^{1,2}(\R^{\n})$ and converges  weakly in ${\mathcal D}^{1,2}(\R^{\n})$ to a weak solution $z$ of \eqref{Teq} with the regularity  $|x|^{\frac \alpha 2} z, |\nabla z| \in L^2$.
Since $z_n$ converges to $z$ pointwise a.e. (up to a subsequence) we get that $|z|\le \const e^{-|x|}$
so that $z\in X$ as requested.
Lastly, uniqueness follows by standard energy estimates.

To check compactness, let us take $v_n$ a bounded sequence in $X$: in particular  $v_n$ converge towards some $v\in X$ weakly in ${\mathcal D}^{1,2}$, strongly in any $L^q$ (as $q>1$) and pointwise a.e. (up to a subsequence). Let then  $z_n=T(\alpha,v_n)$.
The same arguments as before imply that $z_n$ are uniformly bounded in $X$.
Hence also the sequence $z_n$ converges towards some $z$ weakly in ${\mathcal D}^{1,2}$, strongly in any $L^q$ (as $q>1$) and pointwise a.e.  (up to a subsequence). Moreover it is easy to check that $z$ is a weak solution to \eqref{Teq}. Therefore 
\begin{align*}
\int_{\R^{\n}} |\nabla z_n|^2 dx = \int_{\R^{\n}}|x|^{\alpha}f(v_n)  z_n dx - m \int_{\R^{\n}} |x|^{\alpha} |z_n|^2 dx \\
\to \int_{\R^{\n}}|x|^{\alpha}f(v)  z dx - m \int_{\R^{\n}} |x|^{\alpha} |z|^2 dx = \int_{\R^{\n}} |\nabla z|^2 dx  
\end{align*}
as $n\to\infty$.
So $\nabla z_n \to \nabla z$ strongly in $ L^2(\R^{\n})$.
Besides, the same arguments of the proof of Lemma \ref{Tdef} yields that $|z| , |z_n| \le \zeta$, where $\zeta$ is the solution to \eqref{eq:z} with $\bsc\ge \|v_n\|_X$ for all $n$. We thus  compute
\begin{align*}
\sup\limits_{\R^{\n}} |z_n-z| e^{|x|} \le \sup\limits_{B_R} |z_n-z| e^{|x|} + \sup\limits_{\R^{\n}\setminus B_R} |z_n-z| e^{|x|} \\
\le \sup\limits_{B_R} |z_n-z| e^{|x|} + 2 \sup\limits_{\R^{\n}\setminus B_R} |\zeta| e^{|x|} .
\end{align*}  
The last term vanishes as $R\to\infty$ in virtue of Lemma \ref{piucheesp}. Eventually $z\in X$ and the thesis follows, because $z_n\to z$ locally uniformly. 
\end{proof}

Next Lemma inherits $T'_v$, the Fr\'echet derivative of the operator $T$ with respect to $v\in X$.

\begin{lemma}\label{Tinvert} Assume \eqref{Fin0}, and take that problem \eqref{1radtr}, \eqref{bc} has a nondegenerate solution $V_{\alpha}$ for every $\a\in I$.
Then the linearized operator $T'_v(\a,u_\a)$ is invertible for every $\a\in I$ such that $\a\neq \a_i$.
\end{lemma}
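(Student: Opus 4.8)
The plan is to show that $\mathrm{Id}-T'_v(\a,u_\a)$ is invertible on $X$ (this is the operator whose non-invertibility detects possible bifurcation points of the fixed point equation $v=T(\a,v)$), by identifying its kernel with the space of admissible solutions of the linearized problem \eqref{L} and then invoking Proposition \ref{lin}\emph{(i)}.

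First I would compute the Fr\'echet derivative. Since $f=F+m\,\mathrm{Id}\in C^1(\R)$ and $u_\a\in X$ decays exponentially, the Nemytskii map $v\mapsto |x|^\a f(v)$ is differentiable at $u_\a$ in the relevant topology, and composing with the (linear, bounded, well posed by the argument of Lemma \ref{Tdef}) solution operator $\left(-\D+m|x|^\a I\right)^{-1}$ yields
\[ T'_v(\a,u_\a)[w]=\left(-\D+m|x|^\a I\right)^{-1}\!\big(|x|^\a f'(u_\a)\,w\big)=\left(-\D+m|x|^\a I\right)^{-1}\!\big(|x|^\a(F'(u_\a)+m)\,w\big). \]
Because $F\in C^1$ and $u_\a$ is bounded, $F'(u_\a)$ is a bounded function, so $w\mapsto |x|^\a(F'(u_\a)+m)w$ is controlled by a constant times multiplication by $|x|^\a$; running the exhaustion/comparison estimates of the proof of Lemma \ref{Tdef} verbatim shows that $T'_v(\a,u_\a)$ is a compact linear operator on $X$. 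Consequently $\mathrm{Id}-T'_v(\a,u_\a)$ is Fredholm of index zero, hence invertible if and only if it is injective.

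Now $w\in X$ lies in $\ker\big(\mathrm{Id}-T'_v(\a,u_\a)\big)$ exactly when $\left(-\D+m|x|^\a\right)w=|x|^\a(F'(u_\a)+m)w$ in $\R^\n$, that is, $-\D w=|x|^\a F'(u_\a)\,w$. Moreover any such $w\in X$ automatically satisfies the summability required in \eqref{L}: membership in $D^{1,2}(\R^\n)$ gives $|\na w|\in L^2$, while $e^{|x|}w\in L^\infty$ forces $|x|^{\a/2}w\in L^2$. Hence $\ker\big(\mathrm{Id}-T'_v(\a,u_\a)\big)$ coincides with the space of admissible solutions of \eqref{L}. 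Since $V_\a$ is nondegenerate with Morse index one, Proposition \ref{lin}\emph{(i)} tells us that \eqref{L} has a nontrivial solution only when $\a=2i$ is an even integer, i.e.\ only when $\a=\a_i$; thus for $\a\in I$ with $\a\neq\a_i$ the kernel is trivial and the Fredholm alternative gives invertibility.

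The main obstacle is the first step: justifying rigorously that $T$ is Fr\'echet differentiable at $(\a,u_\a)$ with the stated derivative, and that this derivative is compact on $X$. Both facts reduce to the weighted elliptic estimates already developed for Lemma \ref{Tdef}; the only delicate point is handling the exponentially weighted $L^\infty$ component of the $X$-norm when passing to the limit, exactly as in the compactness part of that proof. Once differentiability and compactness are secured, the remainder is the soft Fredholm-theory argument combined with Proposition \ref{lin}.
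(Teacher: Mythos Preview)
Your proposal is correct and is precisely the argument the paper has in mind: the authors omit the proof entirely, writing only that ``this fact is a byproduct of Proposition \ref{lin}''. You have supplied the standard details they suppress---compactness of $T'_v(\a,u_\a)$ (inherited from Lemma \ref{Tdef}), the Fredholm alternative reducing invertibility of $\mathrm{Id}-T'_v(\a,u_\a)$ to triviality of its kernel, and identification of that kernel with the solution space of \eqref{L}, which Proposition \ref{lin}\emph{(i)} then handles. One minor remark: the Morse index one hypothesis you invoke for Proposition \ref{lin} is not literally stated in Lemma \ref{Tinvert}, but it is part of the standing assumptions of Theorem \ref{teo1} where the lemma is used, so this is a harmless imprecision in the paper's statement rather than a gap in your argument.
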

This fact is a byproduct of Proposition \ref{lin}, and we omit the proof.

Let us introduce some notations concerning the symmetries of $\R^N$:
\[H:=\{v\in X\, :\, v(x_1,\dots,x_{\n})=v(g(x_1,\dots x_{\n-1}),x_{\n})\, \hbox{ for any } g \in O(\n-1)\},\]
and the subgroups of $O(\n)$:
\[G_h=O(h)\times O(\n-h) \quad \qquad \hbox{ for }1\leq h\leq \left[\frac {\n}2\right]\]
where $[a]$ stands for the integer part of $a$. Also, we  denote by $H^h$ the subspaces of $X$
 of functions invariant by the action of $G_h$.\\
The result of Smoller and Wasserman in \cite[Proposition 5.2]{SW} implies that, for $j=0,1,\dots$ the eigenspace of the Laplace Beltrami operator related to $\mu_j$, i.e. the solutions to $-\Delta_{S^{\n-1}}Y_j=\mu_jY_j$ contains only one eigenfunction which is $O(N-1)$-invariant. The same authors in \cite[Lemma 6.5]{SW90} prove that when $j$ is even the eigenspace of the Laplace Beltrami operator related to $\mu_j$ has only one solution which is invariant by the action of $G_h$
for $h=1,\dots,\left[\frac {\n}2\right]$.
Then Corollary \ref{cor1} and Remark \ref{r1}  imply that
\begin{equation}\label{cambio}
m_{H}(\a_i+\e)-m_{H}(\a_i-\e)=1
\end{equation}
if $\e$ is small enough, where $m_{H}$ denotes the Morse index of $u_\a$ in the space $H$ (or $H^h$). This odd change in the Morse index is responsible of the bifurcation.\\
First we prove the local bifurcation result.
\begin{proposition}\label{local}
Under the same hypotheses of Theorem \ref{teo1}, the points $(\a_i,u_{\a_i})$ are nonradial bifurcation points for the curve $(\a,u_\a)$ in the space $I\times H$ (or $I\times H^h$).
\end{proposition}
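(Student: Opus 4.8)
The plan is to cast Proposition~\ref{local} as an application of Krasnoselski's classical local bifurcation theorem (via the change of parity of the Leray--Schauder index) in the appropriate symmetric function space. I would work in $I\times H$ (the case $I\times H^h$ being identical once one replaces $H$ by $H^h$ everywhere). The starting point is that, by Lemma~\ref{Tdef}, for each fixed $\alpha$ the operator $v\mapsto T(\alpha,v)$ is compact on $X$, and it is readily seen to map $H$ into $H$ since the weight $|x|^\alpha$ and the equation \eqref{Teq} are rotationally invariant, hence $O(\n-1)$-equivariant; moreover $T(\alpha,\cdot)$ and its $v$-derivative depend continuously on $\alpha$. Thus $v-T(\alpha,v)$ is a compact perturbation of the identity on $H$, the radial solutions $u_\alpha$ form a curve of fixed points, and we are in the abstract setting where bifurcation can be detected by a jump of the Leray--Schauder index $i(I-T(\alpha,\cdot),u_\alpha)$ across $\alpha=\alpha_i$.

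The key computational step is to identify this index with $(-1)^{m_H(\alpha)}$, where $m_H(\alpha)$ is the Morse index of $u_\alpha$ restricted to $H$. First I would record that, by Lemma~\ref{Tinvert}, for $\alpha\neq\alpha_i$ the linearization $I-T'_v(\alpha,u_\alpha)$ is invertible on $H$, so the index is well defined and locally constant near $\alpha_i^\pm$ and equals $(-1)^\beta$ with $\beta$ the number of eigenvalues of $T'_v(\alpha,u_\alpha)$ in $H$ that exceed $1$, counted with multiplicity. One then checks, by the same spherical-harmonic decomposition and change of variables \eqref{cov} used in the proofs of Proposition~\ref{lin} and Corollary~\ref{cor1}, that the eigenvalues of $T'_v(\alpha,u_\alpha)$ greater than $1$ correspond exactly to the negative eigenvalues of the weighted linearized operator \eqref{LA}, i.e.\ to the negative Morse directions of $u_\alpha$; restricting to $H$ and using the Smoller--Wasserman fact that each eigenspace of $-\Delta_{S^{\n-1}}$ contributes precisely one $O(\n-1)$-invariant (resp.\ $G_h$-invariant, $j$ even) eigenfunction, this count is exactly $m_H(\alpha)$. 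Consequently $i(I-T(\alpha,\cdot),u_\alpha)=(-1)^{m_H(\alpha)}$ for $\alpha$ near $\alpha_i$, $\alpha\neq\alpha_i$.

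Now I would invoke \eqref{cambio}: since $m_H(\alpha_i+\e)-m_H(\alpha_i-\e)=1$ for $\e$ small, the index changes sign across $\alpha_i$, namely $i(I-T(\alpha_i+\e,\cdot),u_{\alpha_i+\e})=-\,i(I-T(\alpha_i-\e,\cdot),u_{\alpha_i-\e})$. By Krasnoselski's theorem this forces $(\alpha_i,u_{\alpha_i})$ to be a bifurcation point for the equation $v=T(\alpha,v)$ in $I\times H$: there is a sequence $(\alpha_n,v_n)\to(\alpha_i,u_{\alpha_i})$ in $I\times H$ with $v_n\neq u_{\alpha_n}$ solving \eqref{1}. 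It remains to argue these $v_n$ are genuinely nonradial: a radial solution of \eqref{1} near $u_{\alpha_n}$ would, via Proposition~\ref{p:radial}, produce a solution of \eqref{1radtr}, \eqref{bc} near $V_{\alpha_n}$, contradicting the nondegeneracy of $V_{\alpha_n}$ (which makes $u_{\alpha_n}$ radially nondegenerate, hence radially isolated) once $\alpha_n\neq\alpha_i$; and $\alpha_n\neq\alpha_i$ can be assumed along the sequence because $I-T'_v(\alpha_i,u_{\alpha_i})$ has a one-dimensional kernel transverse to the radial directions, so the bifurcating branch is not the radial curve itself.

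The main obstacle I anticipate is the index identification $i(I-T(\alpha,\cdot),u_\alpha)=(-1)^{m_H(\alpha)}$: one must carefully relate the spectrum of the compact operator $T'_v(\alpha,u_\alpha)=(-\Delta+m|x|^\alpha)^{-1}(|x|^\alpha f'(u_\alpha)\,\cdot\,)$ to the quadratic form defining the Morse index, checking that no eigenvalue equal to $1$ is lost, that the counting of eigenvalues $>1$ matches the negative directions of \eqref{LA} (this is where Lemma~\ref{x2} and the weighted eigenvalue analysis of the Appendix enter), and that the restriction to the symmetric subspace $H$ is compatible with all of this — in particular that $T'_v$ leaves $H$ invariant and that its spectrum on $H$ is the part of the full spectrum carried by $H$-invariant eigenfunctions. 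The symmetry bookkeeping via \cite{SW} and \cite{SW90} is routine but must be stated precisely so that \eqref{cambio} genuinely gives a jump of exactly one in $m_H$, hence an odd (in particular nonzero) change, which is all Krasnoselski's criterion requires.
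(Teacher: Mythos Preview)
Your proposal is correct and follows essentially the same approach as the paper: both reduce to showing that the Leray--Schauder index of $I-T(\alpha,\cdot)$ at $u_\alpha$ in $H$ equals $(-1)^{m_H(\alpha)}$ for $\alpha\neq\alpha_i$, then invoke \eqref{cambio} to obtain the sign change that forces bifurcation, with nonradiality deduced from the radial nondegeneracy of $u_\alpha$. The paper phrases the argument as a contradiction via homotopy invariance of the degree rather than citing Krasnoselski directly, but the content is identical.
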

\begin{proof} 
Assume by contradiction that  $(\a_i,u_{\a_i})\in I\times X$ is not a bifurcation point for \eqref{1} for some $i$. Then there exists $\e_0>0$ such that $\forall \e\in (0,\e_0)$ and $\forall c\in (0,\e_0)$ 
$$v-T(\a,v)\neq 0$$
for any $\a\in (\a_i-\e,\a_i+\e)\subset I$ and for any $v\in H$ (or in $H^h$) such that $\nor v-u_\a\nor_X\leq c$ and $v\neq u_\a$.\\
Observe that for every $\a$ the pair $(\a,u_{\a})$ is a solution of \eqref{1} and this implies that satisfies $u_\a-T(\a,u_\a)=0$ for every $\a\in I$. From Lemma \ref{Tdef} we know that the Leray Schauder degree of $I-T$ is well defined in $X$ and hence also in $H$ (or in $H^h$).\\
Now we consider the case of the space $H$.
Let $\Gamma:=\{(\a,v)\in (\a_i-\e,\a_i+\e)\times H\,\,:\,\,\nor v-u_\a\nor_X\leq c\}$ and $\Gamma_\a:=\{v\in H\hbox{ s.t. }(\a,v)\in \Gamma\}$. By the homotopy invariance of the Leray Schauder degree
we have that
\begin{equation}\label{constant}
deg(I-T(\a,\cdot), \Gamma_\a,0) \hbox{ is constant on }(\a_i-\e,\a_i+\e).
\end{equation}
As proved in \cite[Theorem 3.20]{AM} the Leray Schauder degree in \eqref{constant} for
$\a\neq \a_i$ is equal to $(-1)^{\gamma}$ where $\g$ is the number of the eigenvalues
of $T'_v(\a,u_{\a})$  counted with multiplicity contained in $(1,+\infty)$.
We know that $\L$ is  
an eigenvalue for the linear operator $T'_v(\a,u_{\a})$  if and only if 
$$\L I-T'_v(\a,u_{\a})I=0$$
has a nontrivial solution.   
This means we have  to find $w\in H$, $w\neq 0$ which verifies
\begin{equation}\label{4.5}
-\Delta w+ m|x|^{\a}w=\frac 1{\L}|x|^{\a}f'(u_\a)w \quad \hbox{ in }\R^{\n}
\end{equation}
for some $\frac 1{\L}\in (0,1)$. We can infer then that the Leray Schauder degree in \eqref{constant}  for
$\a\neq \a_i$ is equal to $(-1)^{m(\a)}$ where $m(\a)$ is the Morse index of $u_\a$ in the space $H$.\\
Then we have 
$$deg(I-T(\a_i\pm \e,\cdot), \Gamma_{\a_i\pm\e},0)=(-1)^{m(\a_i\pm\e)}$$
% where $m(\a)$ is  the Morse index  of $u_\a$.
so that \eqref{cambio} implies 
$$deg(I-T(\a_i- \e,\cdot), \Gamma_{\a_i-\e},0)=-deg(I-T(\a_i+ \e,\cdot), \Gamma_{\a_i+\e},0)$$
contradicting \eqref{constant}. Then $(\a_i,u_{\a_i})$ is a bifurcation point for \eqref{1} and the
bifurcating solutions are nonradial since $u_\a$ is radially nondegenerate via Proposition \ref{lin},
 because by assumption $V_\a$ is nondegenerate.
\end{proof}

Eventually, we complete the proof of the global bifurcation result.

\begin{proof}[Proof of Theorem \ref{teo1}]
The  global bifurcation follows by Lemmas \ref{Tdef}, \ref{Tinvert} and Proposition \ref{local} by using the Leray-Schauder degree
theory, as in the classical Rabinowitz result \cite{R}; see also \cite{AM} for a proof.

The final step to complete the proof is to show that when $i$ is odd, then the bifurcating solutions we find in $H^h$ are distinct. Indeed, any solution $v$ which is invariant for the action of two distinct groups $G_{h_1}$ and $G_{h_2}$ with $h_1\neq h_2$ should be radial (see \cite[Lemma 6.3]{SW90}), and this is not possible because $u_\a$ is radially nondegenerate by Proposition \ref{lin}. 
\end{proof}

\begin{remark}\rm 
Theorem \ref{teo1} states that the points $(\a_i,u_{\a_i})$ are bifurcation points, and that the bifurcation is global. Moreover the Rabinowitz alternative holds for the branches of nonradial solutions that bifurcate from $(\a_i,u_{\a_i})$. Hence we have that a branch of bifurcating solutions or it is unbounded in $I\times X$, or it meets $\partial I\times X$ or there exists $\a_j$ with $j\neq i$ such that $(\a_j,u_{\a_j})$ belongs to the same branch.
\end{remark}

\section{Appendix}

We report here for the sake of completeness some facts about ODE that have been used through the paper.
The first results show the link between the eigenvalue problem associated to the linearized equation \eqref{L}  and the eigenvalue problem with weight \eqref{Larmtr}. This weighted eigenvalue problem is required in the proof of Proposition \ref{lin} and its implications.

\begin{lemma}\label{x1}
Let $V_\a$ be a solution to \eqref{1radtr}. If $V_\a$ has Morse index one then the weighted eigenvalues problem \eqref{Larmtr} admits only one negative eigenvalue in $E$.
\end{lemma}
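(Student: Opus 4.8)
The plan is to reduce the statement to the variational meaning of the Morse index of $V_\a$ recalled just before \eqref{E}. First I would put \eqref{Larmtr} in weak form: multiplying the equation by $t^k\phi$ and integrating by parts on $(0,+\infty)$, a number $\lambda$ is an eigenvalue with eigenfunction $v\in E$ exactly when
\[
Q(v,\phi):=\int_0^{+\infty}t^k\bigl(v'\phi'-F'(V_\a)v\phi\bigr)\,dt=\lambda\int_0^{+\infty}t^{k-2}v\,\phi\,dt
\]
for every admissible $\phi$; here $Q(v):=Q(v,v)$ is precisely the quadratic form whose sign, constrained by $\int_0^{+\infty}t^kv^2\,dt$, governs the Morse index of $V_\a$. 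The conceptual point is that the numerator is the \emph{same} form $Q$ in both problems: only the positive weight in the constraint changes ($t^{k-2}$ here versus $t^{k}$ in the Morse index), and the count of negative eigenvalues is an intrinsic property of $Q$ on $E$.

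I would then argue by contradiction. Suppose \eqref{Larmtr} has two negative eigenvalues $\lambda_1\le\lambda_2<0$ in $E$, or a single negative eigenvalue of multiplicity at least two; in either case fix linearly independent eigenfunctions $v_1,v_2\in E$. Testing the equation for $v_i$ against $t^kv_j$ gives $Q(v_i,v_j)=\lambda_i\int_0^{+\infty}t^{k-2}v_iv_j\,dt$, and since the left-hand side is symmetric in $i,j$ we obtain $(\lambda_1-\lambda_2)\int_0^{+\infty}t^{k-2}v_1v_2\,dt=0$. If $\lambda_1\ne\lambda_2$ this already yields $\int_0^{+\infty}t^{k-2}v_1v_2\,dt=0$; if $\lambda_1=\lambda_2$, I replace $v_2$ by $v_2-cv_1$ with $c=\int_0^{+\infty}t^{k-2}v_1v_2\,dt\big/\int_0^{+\infty}t^{k-2}v_1^2\,dt$, which is harmless since $\int_0^{+\infty}t^{k-2}v_1^2\,dt>0$. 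In both cases $Q(v_1,v_2)=0$ while $Q(v_i)=\lambda_i\int_0^{+\infty}t^{k-2}v_i^2\,dt<0$, so $Q$ is negative definite on the two-dimensional subspace $W=\mathrm{span}\{v_1,v_2\}\subset E$. Hence
\[
\max_{\substack{w\in W\\ w\ne0}}\frac{\int_0^{+\infty}t^k\bigl((w')^2-F'(V_\a)w^2\bigr)\,dt}{\int_0^{+\infty}t^kw^2\,dt}<0,
\]
which, $W$ being a $2$-dimensional subspace of $E$, contradicts the assumption that $V_\a$ has Morse index one. Therefore \eqref{Larmtr} has at most one negative eigenvalue, and it is simple. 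Existence of one negative eigenvalue follows once a negative Rayleigh quotient is displayed: differentiating \eqref{1radtr} shows that $W=V_\a'$ solves \eqref{Larmtr} with $\lambda=-k<0$ whenever $V_\a'\in E$ (the situation in which the lemma is used, cf. Proposition \ref{lin}); alternatively, Morse index one forces $\inf_{v\in E}Q(v)\big/\int_0^{+\infty}t^{k-2}v^2\,dt<0$ and a minimizer is a negative eigenfunction.

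The main obstacle I anticipate is the rigorous justification of the weak formulation: that the boundary contributions $[t^kv'\phi]$ vanish at $t=0$ and at $t=+\infty$, and that $\int_0^{+\infty}t^{k-2}v^2\,dt$ is finite and positive on the eigenfunctions. At the origin this rests on $k>1$: integrating \eqref{Larmtr} near $0$ gives $t^kv'(t)\to0$ exactly as in the proof of Proposition \ref{p:radial}, and membership of $v$ in $E$ rules out the singular local solution $\sim t^{1-k}$ (which fails $\int_0 t^k(v')^2\,dt<\infty$ since $k>1$), so $v$ is regular at $0$ and $\int_0 t^{k-2}v^2\,dt<\infty$. At infinity one uses $t^{k-2}\le t^{k}$ together with the decay of $V_\a$ and of $v$ forced by the equation and by $v\in E$, as in Lemma \ref{morethanexpV}. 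Apart from these routine ODE estimates no spectral-theoretic machinery is needed: the entire argument is the observation that the number of $E$-eigenfunctions with negative eigenvalue is an intrinsic index of the quadratic form $Q$, hence cannot exceed the Morse index of $V_\a$.
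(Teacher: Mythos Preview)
Your proposal is correct and follows essentially the same route as the paper's proof: both assume two negative weighted eigenvalues exist, deduce the $t^{k-2}$-orthogonality of the eigenfunctions from the equations, and conclude that the quadratic form $Q$ is negative definite on a two-dimensional subspace of $E$, contradicting Morse index one. Your treatment is slightly more careful than the paper's in handling the case of a repeated eigenvalue via Gram--Schmidt and in discussing the boundary terms; for existence the paper simply invokes that the weighted Rayleigh quotient attains its (negative) infimum, citing \cite{GG?,GGN2}, rather than using $V_\a'$ directly.
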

\begin{proof}
By definition of Morse index we have that 
\[ \inf_{\begin{subarray}{c}w\in E,\\ w\neq 0\end{subarray}}\frac{\int_0^{+\infty}r^k\left((w')^2-F'(V_{\a}) w^2\right) dr}{ \int_0^{+\infty}r^{k-2}w^2\, dr}=\Lambda_1(k)<0.\]
Arguing as in proof of  \cite[Proposition  5.4]{GG?} (see also  \cite[Proposition  A.1]{GGN2}), one can see that actually the minimum is attained, and  there exists a positive function $w_1\in E$ that solves \eqref{Larmtr} for $\lambda=\Lambda_1(k)$. 
Assume, by contradiction, there is another eigenfunction $w_2\neq 0\in E$ corresponding to another negative eigenvalue with weight $\Lambda_2(k)$. Then, using the equations satisfied by $w_1$ and $w_2$, we get $\int_0^{+\infty}r^{k-2}w_1w_2\, dr=0$ so that $w_1$ and $w_2$ are linearly independent. This implies that the quadratic form $\int_0^{+\infty}r^k\left((w')^2-F'(V_{\a}) w^2\right) dr$ is negatively defined on the two-dimensional subspace of $E$ spanned by $w_1$ and $w_2$ contradicting the fact that $V_\a$ has Morse index one.
%If, by contradiction, there is another eigenfunction $w_2\in E$ corresponding to another negative eigenvalue with weight $\Lambda_2(k)$, then $w_1$ and $w_2$ are orthogonal in $E$, and this would imply that the Morse index of $V_\a$ is at least two, contradicting the assumptions.
\end{proof}

\begin{lemma}\label{x2}
Let $u_\a$ be the radial solution to \eqref{1}. Then the Morse index of $u_\a$ coincides with the number, counted with multiplicity, of the negative eigenvalues with
weight of the problem \eqref{LA}.
\end{lemma}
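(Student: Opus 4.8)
The goal is to show that the Morse index of the radial solution $u_\a$ to \eqref{1} equals the number of negative eigenvalues (counted with multiplicity) of the weighted eigenvalue problem \eqref{LA}, namely
\[
-\Delta w = |x|^{\a} F'(u_\a)\, w + \frac{\l}{|x|^2} w \qquad \hbox{ in } \R^{\n},
\]
in the class of $w$ with $|x|^{\a/2} w, |\nabla w| \in L^2(\R^{\n})$. The plan is to relate both quantities to the quadratic form
\[
Q(w) = \int_{\R^{\n}} \left( |\nabla w|^2 - |x|^{\a} F'(u_\a) w^2 \right) dx
\]
on the natural form domain $\mathcal H := \{ w : |\nabla w|, |x|^{\a/2} w \in L^2(\R^{\n}) \}$, which is precisely the space in which the linearization \eqref{L} is posed. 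By definition, the Morse index of $u_\a$ is the maximal dimension of a subspace of $\mathcal H$ on which $Q$ is negative definite.

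First I would record that $\mathcal H$ embeds compactly into the weighted space $L^2(\R^{\n}, |x|^{-2} dx)$ restricted to functions with the stated summability; this is the Hardy-type/Rellich compactness that makes the eigenvalue problem \eqref{LA} well posed with a discrete spectrum below the essential spectrum, exactly as in the radial setting of Lemma \ref{x1}. Concretely, decomposing along spherical harmonics $w(x) = \sum_i w_i(|x|) Y_i(\theta)$ and applying the change of variable \eqref{cov} reduces \eqref{LA} on each mode to the one-dimensional weighted problem \eqref{Larmtr} with $\l$ replaced by $\l + \tfrac{(2+\a)^2}{4}\big(k(\a) - \tfrac{4\mu_i}{(2+\a)^2}\big)$-type shifts; thus the spectrum of \eqref{LA} is the union over $i$ of shifted copies of the spectrum of \eqref{Larmtr}, each with multiplicity $\n_i$, and in particular the negative eigenvalues of \eqref{LA} are finitely many. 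Then the standard Courant–Fischer min-max characterization gives that the number of negative eigenvalues of \eqref{LA}, with multiplicity, equals $\max\{ \dim W : W \subset \mathcal H, \ Q|_{W\setminus\{0\}} < 0\}$, because $\l$ is a negative eigenvalue of \eqref{LA} iff the corresponding Rayleigh quotient $Q(w)/\int |x|^{-2} w^2$ is negative, and the generalized eigenfunctions span the maximal negative subspace of $Q$. This is the same argument as the one used implicitly in \cite[Proposition 5.4]{GG?} and \cite[Proposition A.1]{GGN2} and already invoked for Lemma \ref{x1}; the weight $|x|^{-2}$ plays the role of a harmless positive density.

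The one genuine point to check — and the step I expect to be the main obstacle — is that this min-max quantity is exactly the Morse index, i.e.~that the weighted problem \eqref{LA} faithfully detects the negativity of $Q$ and does not lose (or create) directions because of the singular weight $|x|^{-2}$ at the origin and the decay at infinity. For this I would argue that $\mathcal H$ with the inner product $\langle w, \tilde w\rangle_w = \int_{\R^{\n}} |x|^{-2} w \tilde w\, dx$ is a well-defined pre-Hilbert structure on which $Q$ is a closed, lower-semibounded form with form domain dense in the weighted $L^2$; the associated self-adjoint operator is precisely \eqref{LA}, and its negative eigenspaces, by the spectral theorem together with the compact embedding above, furnish a $Q$-negative subspace of maximal dimension. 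Conversely any finite-dimensional $Q$-negative subspace can be compared against the eigenfunctions via the min-max principle, so neither count exceeds the other. Collecting these facts yields equality of the Morse index of $u_\a$ with the number of negative weighted eigenvalues of \eqref{LA}, which is the assertion; combined with the spherical-harmonic reduction it also gives the explicit formula of Corollary \ref{cor1}, as used there.
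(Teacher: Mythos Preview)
Your proposal is correct and follows essentially the same variational/min-max route that the paper invokes: the paper's own proof is a single sentence deferring to \cite[Corollary 5.6]{GG?}, whose content is precisely the argument you outline (relating both the Morse index and the count of negative weighted eigenvalues to the maximal dimension of a subspace on which the quadratic form $Q$ is negative). Your write-up simply spells out in detail what the paper leaves to the citation.
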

\begin{proof}
The proof follows as in \cite[Corollary 5.6]{GG?}: each time we have a negative eigenvalue, also the eigenvalue with weight \eqref{LA} is attained.
\end{proof} 

Afterwards, we recall some properties of the solution of the ordinary differential equation \eqref{1radtr}. Under the only assumption \eqref{Fin0}, every positive solutions that vanish at infinity have more than exponential decay, and the same holds for their derivatives; in particular, they belong  to the space $E$.

\begin{lemma}\label{morethanexpV}
Assume \eqref{Fin0}, and take $V$ a positive solution to \eqref{1radtr} vanishing at infinity.
 Then for every $\delta \in(0,\sqrt{m})$ there exist constants $\const$ and $\const_1$  such that
\[
V(t) \le \const t^{-\frac{k}{2}} e^{-\delta t} , \qquad |V'(t)| \le {\const}_1 t^{-\frac{k}{2}} e^{-\delta t} , \qquad 
\]
\end{lemma}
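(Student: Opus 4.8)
The plan is to derive both estimates from a comparison argument against the barrier $t^{-k/2}e^{-\delta t}$, supplemented by an integral representation for $V'$; only assumption \eqref{Fin0} enters. First I would record what \eqref{Fin0} gives near infinity: since $F\in C^1[0,+\infty)$, $F(0)=0$ and $F'(0)=-m$, one has $F(s)=-ms+o(s)$ and $|F(s)|\le(m+1)s$ as $s\to0^+$; as $V$ is positive with $V(t)\to0$, there is $T>0$ so that both hold with $s=V(t)$ for $t\ge T$. Fix $\delta\in(0,\sqrt m)$, pick $\e\in(0,m-\delta^2)$, and consider the linear operator $\mathcal L w:=-w''-\frac k t w'+(m-\e)w$. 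Equation \eqref{1radtr} then yields, on $[T,\infty)$ (enlarging $T$ if needed), $\mathcal L V=F(V)+(m-\e)V=-\e V+o(V)\le0$, so $V$ is an $\mathcal L$-subsolution there.

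Next I would build the barrier. A direct computation gives, for $\bar w(t):=A\,t^{-k/2}e^{-\delta t}$,
\[\mathcal L\bar w=A\,t^{-k/2}e^{-\delta t}\Big[(m-\e-\delta^2)+\tfrac{k(k-2)}{4t^2}\Big];\]
the exponent $k/2$ is chosen precisely so as to cancel the $1/t$ contribution. Because $m-\e-\delta^2>0$, there is $T_1\ge T$ with $\mathcal L\bar w\ge0$ on $[T_1,\infty)$, for any admissible value of $k\in(1,\n-1)$. Choosing $A$ large enough that $\bar w(T_1)\ge V(T_1)$, the function $V-\bar w$ satisfies $\mathcal L(V-\bar w)\le0$ on $(T_1,\infty)$, is $\le0$ at $t=T_1$, and vanishes at infinity; since the zeroth-order coefficient of $\mathcal L$ is positive, a positive interior maximum is impossible, whence $V\le\bar w$ on $[T_1,\infty)$. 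On the compact set $[0,T_1]$ the estimate $V(t)\le\const\,t^{-k/2}e^{-\delta t}$ follows after enlarging $\const$, since $V$ is bounded there while $t^{-k/2}e^{-\delta t}$ is bounded below by a positive constant.

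For the derivative I would use $(t^kV')'=-t^kF(V)$, an equivalent form of \eqref{1radtr}. The bound on $V$ makes $\tau\mapsto\tau^kF(V(\tau))$ integrable at infinity, so $\ell:=\lim_{t\to\infty}t^kV'(t)$ exists; since $F(V)<0$ on $[T,\infty)$, $t^kV'$ is increasing there, hence $\le\ell$. One rules out $\ell>0$ (it would make $V$ eventually increasing, impossible for a positive function tending to $0$) and $\ell<0$ (it would give $V'(t)<\ell t^{-k}$, hence $V(t)\ge\frac{|\ell|}{k-1}\,t^{1-k}$, contradicting the exponential decay just proved, using $k>1$), so $\ell=0$ and $t^kV'(t)=\int_t^\infty\tau^kF(V(\tau))\,d\tau$. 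Then $|F(V)|\le(m+1)V\le(m+1)\const\,\tau^{-k/2}e^{-\delta\tau}$ together with the elementary bound $\int_t^\infty\tau^{k/2}e^{-\delta\tau}\,d\tau\le C\,t^{k/2}e^{-\delta t}$ (for $t\ge1$) give $|V'(t)|\le t^{-k}\!\int_t^\infty\tau^k|F(V(\tau))|\,d\tau\le\const_1\,t^{-k/2}e^{-\delta t}$ for $t$ large; the bound on $[0,T_1]$ follows from continuity of $V'$.

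The one point that needs care is the power in the barrier: any $t^{-\beta}e^{-\delta t}$ decays exponentially, but only $\beta=k/2$ eliminates the first-order term from $\mathcal L\bar w$, and this is what lets the comparison work uniformly over all $k\in(1,\n-1)$ — in particular for $k<2$, where the leftover $1/t^2$ term has the unfavorable sign but is dominated for large $t$ by the positive constant $m-\e-\delta^2$; it is exactly the strict inequality $\delta<\sqrt m$ that keeps this constant positive.
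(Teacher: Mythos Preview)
Your argument is correct and reaches the same conclusion, but the mechanics differ from the paper's. The paper (following \cite{BL-ARMA}) sets $W=t^{k/2}V$, obtains $W''=aW$ with $a(t)\to m$, hence $W''\ge\delta^2W$ for large $t$, and then uses the monotone quantity $Z=(\delta W+W')e^{-\delta t}$: if $Z$ were ever positive it would stay positive and force $V$ not to vanish, so $Z\le0$ and $(We^{\delta t})'\le0$. Your proof replaces this energy/monotonicity trick by a straight barrier comparison: you compute $\mathcal L$ on $\bar w=At^{-k/2}e^{-\delta t}$, see that the $1/t$ terms cancel (exactly the reason for the exponent $k/2$, as you note) and that the remaining $\frac{k(k-2)}{4t^2}$ is dominated by $m-\e-\delta^2>0$, then invoke the maximum principle on $(T_1,\infty)$. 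The two arguments are equivalent reformulations of the same differential inequality; the paper's version avoids saying ``maximum principle on an unbounded interval'' explicitly, while yours makes the barrier visible.

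For $V'$ the paper differentiates \eqref{1radtr}, obtains a second-order equation for $P=V'$ with $F'(V)$ in place of $F(V)/V$, checks $P\to0$ via the integral formula, and then reruns the same decay argument. Your route is shorter: from $(t^kV')'=-t^kF(V)$ you read off $t^kV'(t)=\int_t^{\infty}\tau^kF(V)\,d\tau$ once $\ell=0$ is known, and the estimate on $V$ feeds directly into the integral. This is a genuine simplification over the paper's approach. One small remark: your clause ``$V$ is bounded there'' on $[0,T_1]$ tacitly uses that the solution is regular at $0$ (as it is under \eqref{bc}); the paper's proof likewise only establishes the bound for $t\ge t_0$ and relies on the same implicit assumption to globalize.
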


\begin{proof} We repeat here the arguments of \cite[Lemma 2]{BL-ARMA}.
Let $W(t)=t^{\frac{k}{2}}V(t)$, we have 
\[ W^{\prime\prime}= a\, W, \qquad \text{with } \ a(t)=\dfrac{k(k-2)}{4 t^ 2} - \dfrac{F(V(t))}{V(t)}.  
\]
Because $V(t)$ vanishes as $t\to+\infty$, we have $\liminf\limits_{t\to+\infty} a(t)=m>0$. Hence, for any $\delta\in(0,\sqrt{m})$, there is  $t_o$ such that $a(t)\ge \delta^2$, as $t\ge t_o$. Since $W(t)\ge 0$ we get
\[ W^{\prime\prime}\ge\delta^2 W, \qquad \text{as } \ t\ge t_o.  
\]
Next, let  $Z(t)=\left(\delta W(t)+W'(t) \right) e^{-\delta t}$. $Z$ is nondecreasing on $(t_o,+\infty)$ because $Z' = (W^{\prime\prime} - \delta^2 W)e^{-\delta t} \ge 0$. If there exist $t_1>t_o$ so that $Z(t_1)>0$, then $\delta W(t)+W'(t) \ge Z(t_1) e^{\delta t}$ as $t\ge t_1$. After computations we should obtain
\[
V'(t) + \left(\delta +\frac{k}{2t}\right) V(t)\ge Z(t_1)\, t^{-\frac{k}{2}}e^{\delta t},
\]
for all $t\ge t_1$, which contradicts the assumption $V(t)\to 0$ as $t\to+\infty$.
Hence $Z(t)\le 0$ in $(t_o,+\infty)$, indeed. This implies that
\[ \left(W e^{\delta t}\right)^{\prime}= Z e^ {2\delta t} \le 0  \qquad \text{ as } t\ge t_o,\]
and then $0\le W(t)\le \const e^{-\delta t}$, which gives  the thesis.

Concerning the first derivative,  deriving equation \eqref{1radtr} we have that $V'=P$ satisfies equation
\[%\begin{equation}\label{inserire-referenza}
-P''-\frac kt P'=F'(V) P-\frac k{t^2} P \quad \hbox{in }(0,+\infty).
\]%\end{equation} 
Moreover, as in the proof of Proposition \ref{p:radial}
we have
$$P(t)=-t^{-k}\int_0^ts^kF(V(s))\, ds.$$
From \eqref{Fin0} and the exponential decay of $V$  the integral $\int_T^{+\infty}s^kF(V(s))\, ds$ is bounded,
so that $P$ vanishes at infinity. Thus arguments similar to the ones already used yield  that also the function $P$ has more than exponential decay at infinity and give the thesis.
\end{proof}

Next Lemma shows that any solution to \eqref{1radtr} is bounded at $t=0$.

\begin{lemma}\label{Vbd0}
Assume \eqref{Fin0} and \eqref{Fcresce}, and take $V$ a positive solution to \eqref{1radtr} with $p<\frac{k+3}{k-1}$.
If $V$ vanishes at infinity, then it is bounded near at $t= 0$.
\end{lemma}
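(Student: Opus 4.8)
The plan is to argue by contradiction, assuming $\limsup_{t\to 0^{+}}V(t)=+\infty$. Throughout it helps to recall that $-V''-\frac{k}{t}V'$ is the radial Laplacian of the ``dimension'' $N:=k+1>2$ (possibly non-integer), so that the hypothesis $p<\frac{k+3}{k-1}$ means $p<\frac{N+2}{N-2}$ --- subcriticality for this operator --- and in particular $\tfrac{2}{p-1}>\tfrac{N-2}{2}$. From \eqref{Fcresce} and the continuity of $F$ one has $|F(u)|\le C(1+|u|^{p})$ for all $u\ge0$, and one may assume $F(u)>0$ for $u$ large: in the opposite case $V$ has no strict local maximum where it is large, hence $V$ is monotone and convex near $0$, and a positive convex function on an interval $(0,\varepsilon)$ is automatically bounded near $0$.

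The core is a rescaling argument of Gidas--Spruck type. Applying the doubling lemma to $M:=V^{(p-1)/2}$ on $(0,1]$ (whose only boundary point is $\Gamma=\{0\}$): if $\sup_{(0,1]}y\,M(y)=+\infty$, one gets points $x_j\to0$ with $x_jM(x_j)\to\infty$ and $V\le 2^{2/(p-1)}V(x_j)$ on an interval $|y|\le\lambda_j\to\infty$ around $x_j$ in the rescaled variable, where the rescaling factor $\rho_j$ is chosen to balance the size of $F$ near $V(x_j)$ (essentially $\rho_j=V(x_j)^{-(p-1)/2}$). The functions $W_j(y):=V(x_j)^{-1}V(x_j+\rho_j y)$ are then positive, uniformly bounded on expanding intervals, satisfy $W_j(0)=1$ and
\[
-W_j''-\frac{k\rho_j}{x_j+\rho_j y}\,W_j'=\rho_j^{2}V(x_j)^{-1}F\bigl(V(x_j)W_j\bigr).
\]
Since $x_j/\rho_j=x_jM(x_j)\to\infty$, the first-order term vanishes in the limit; passing to the limit one obtains a positive bounded solution $W$ of $-W''=\ell_* W^{p}$ on all of $\R$ with $\ell_*>0$, which is impossible because a positive concave function on $\R$ is constant. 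If on the contrary $\sup_{(0,1]}y\,M(y)<\infty$, i.e.\ $V(t)\le C\,t^{-2/(p-1)}$ near $0$, I would improve this power bound to boundedness: via the integral identity $t^{k}V'(t)=t^{k}V'(1)+\int_{t}^{1}s^{k}F(V(s))\,ds$ of Proposition~\ref{p:radial}, using that \eqref{Fin0} and Lemma~\ref{morethanexpV} force $V,V'$ to decay faster than any exponential at infinity (hence $\lim_{t\to0^{+}}t^{k}V'(t)=0$), together with a second rescaling at the scale $t$ to exclude that $V$ saturates the profile $t^{-2/(p-1)}$.

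I expect the delicate point to be exactly this last step, because at the critical scale the pure-power equation $-\Delta W=\ell_*W^{p}$ does admit the explicit positive singular solution $W(z)=c\,z^{-2/(p-1)}$ on $\R^{N}\setminus\{0\}$: excluding that a solution of \eqref{1radtr} realizes (even asymptotically) this profile at $t=0$ while decaying at infinity uses crucially both subcriticality, $\tfrac{2}{p-1}>\tfrac{N-2}{2}$, and assumption \eqref{Fin0}, which via Lemma~\ref{morethanexpV} imposes exponential decay at infinity --- incompatible with the merely polynomial decay of $c\,z^{-2/(p-1)}$. Once $V$ is known to be bounded near $0$, substituting back into the integral identity of Proposition~\ref{p:radial} gives $\lim_{t\to0^{+}}t^{k}V'(t)=0$ and hence $V,V'\in E$ for free.
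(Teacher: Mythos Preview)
Your approach via the doubling lemma and blow-up is quite different from the paper's, which instead performs a Kelvin-type transform $\hat V(t)=t^{-(k-1)}V(1/t)$, integrates the resulting equation, and bootstraps the estimate $V(t)\le C\,t^{-(k-1)/2}$ (coming from a radial-lemma bound) to $V(t)\le C(1+t^{\beta_n})$ with $\beta_n\to+\infty$ thanks to subcriticality. That iteration uses only $|F(u)|\le C(1+|u|^p)$ and is completely self-contained.

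Your argument, by contrast, has real gaps. First, the sentence ``a positive convex function on an interval $(0,\varepsilon)$ is automatically bounded near $0$'' is simply false: $t\mapsto 1/t$ is positive, convex and blows up at $0$. So the reduction to ``$F(u)>0$ for $u$ large'' collapses. Second, to pass to a limit equation $-W''=\ell_* W^p$ you need $F(u)/u^p\to\ell_*>0$ as $u\to\infty$; hypothesis \eqref{Fcresce} only gives a finite $\limsup$, which may be zero or may not be a limit at all. In either case the rescaled right-hand side $V(x_j)^{-p}F(V(x_j)W_j)$ need not converge (or converges to $0$), and you get no contradiction. Third --- and you essentially acknowledge this --- the case $V(t)\le C\,t^{-2/(p-1)}$ is not handled: the integral identity from Proposition~\ref{p:radial} does not by itself exclude this profile (indeed $s^k|F(V(s))|$ need not be integrable at $0$ under that bound when $p\le\frac{k+1}{k-1}$), and the remark about exponential decay at infinity being ``incompatible'' with polynomial behaviour concerns the wrong endpoint: we are trying to rule out a singularity at $t=0$, not at $t=\infty$.

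In short, the blow-up strategy could perhaps be made to work under the stronger hypothesis $\lim_{u\to\infty}F(u)/u^p=\ell>0$, but under \eqref{Fin0}--\eqref{Fcresce} alone the paper's Kelvin-transform bootstrap is the right tool, and your sketch does not yet contain a proof.
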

\begin{proof}
We assume by contradiction that  $\lim\limits_{t\to 0} V(t) = +\infty$, and  perform a Kelvin transform
\[ \hat V (t) = t^{-k+1} V({1}/{t}) .\]
It is easily seen that
\begin{align}\label{eqkelvin}
-\left(t^k\hat{V}^{\prime}\right)^{\prime}= t^{-3} F(V({1}/{t})) \quad & & t>0 .
\end{align}
Moreover $t^k{\hat V}^{\prime}(t)= -(k-1)\,V({1}/{t}) -t^{-1}V'({1}/{t})\to 0$ as $t\to 0$ because $V(1/t)$ vanishes very fast (see Lemma \ref{morethanexpV}). Integrating \eqref{eqkelvin} gives
\begin{align*}
-t^k{\hat V}^{\prime}(t) = \int_0^ts^{-3} F(V({1}/{s})) ds = \int_0^{t_o}s^{-3} F(V({1}/{s})) ds
+ \int_{t_o}^t s^{-3} F(V({1}/{s})) ds .
\end{align*}
The first integral in the right hand side is certainly finite (by Lemma \ref{morethanexpV} and \eqref{Fin0}). The second integral can be estimated by \eqref{Fcresce} and \eqref{radiallemma}. If $-3+p(k-1)/{2}\neq -1$, we have
\begin{align*}
-t^k{\hat V}^{\prime}(t) \le {\const}(1+ t^{-2+p(k-1)/{2}}) .
\end{align*}
Hence
\begin{align*}
{\hat V}(t) = -\int_t^{+\infty}{\hat V}^{\prime}(s) ds  \le \const(1+ t^{-2+p(k-1)/2}) \, t^{-(k-1)}, 
\end{align*}
and therefore $V(t)   \le \const(1+ t^{2-p(k-1)/2})$.
Now, if $2-p(k-1)/2\ge 0$ we have reached a contradiction. Otherwise we have improved estimate \eqref{radiallemma} to
\begin{align*}
{ V}(t) \le \const t^{2-p(k-1)/2} ,
\end{align*}
with $2-p(k-1)/2>- (k -1)/2 $.
So we can repeat the arguments starting from this better estimate and obtain, after $n$ steps, that $V (t) \le \const(1+ t^{\beta_n})$ for
$\beta_n=2 \sum\limits_{i=0}^n p^i -p^{n+1}\frac{k-1}{2}$.
But $\beta_n= 2 p^{n+1} \left(\frac{1}{p}\sum\limits_{i=0}^n \left(\frac{1}{p}\right)^i -\frac{k-1}{4}\right)$, where $\frac{1}{p}\sum\limits_{i=0}^{\infty} \left(\frac{1}{p}\right)^i -\frac{k-1}{4} = \frac{1}{p-1} -\frac{k-1}{4} >0$ since $p< \frac{k +3}{k-1}$ by assumptions.
Then $\beta_n\to \infty$, which means that after a finite number of steps we obtain that  $V (t) \le \const(1+ t^{\beta_n})$ with $\beta_n\ge0$, i.e.~that $V(0)$ is finite.
\\
If, at some step, we have $-3+p\beta_n= -1$, we can conclude anyway. In fact integrating \eqref{eqkelvin} and using  \eqref{Fcresce} brings to
\begin{align*}
-t^k{\hat V}^{\prime}(t) \le {\const}(1+ \log t) .
\end{align*}
Hence
\begin{align*}
{\hat V}(t) = -\int_t^{+\infty}{\hat V}^{\prime}(s) ds  \le \const(1+ \log t) \, t^{-(k-1)}, 
\end{align*}
and therefore $V(t)   \le \const |\log t|$.
The following iteration gives
\begin{align*}
-t^k{\hat V}^{\prime}(t) \le \const \left(1 + \int_{t_o}^ts^{-3}\log^p s \, ds \right) \le  \const (1 + t^{-1}) ,
\end{align*}
so that $\hat V (t)\le \const (1+t^{-1})\, t^{-(k-1)}$ and eventually $V (t)\le \const$
\end{proof}

We next prove well-posedness of the ODE \eqref{1radtr}, under hypotheses  \eqref{Fin0}, \eqref{Fcresce}--\eqref{F3}.

\begin{proof}[Proof of Proposition \ref{p2.2}]
We first produce a solution to 
\begin{equation}\label{1radtrva}
\left\{\begin{array}{ll}
- V^{\prime\prime}- \dfrac{k(\alpha)}{t} V^{\prime}= F(|V|) , \qquad   & t>0 , \\ 
\lim\limits_{t\to+\infty}V(t)=0, &
\end{array}\right.\end{equation}
 by solving a constrained minimization problem. This part of the proof is next to \cite[Theorem 1]{BL-ARMA}, so we outline it only enlightening the differences.
In the space $E$ assigned by \eqref{E}, we introduce the functionals
\[
{\mathcal T} (V) = \int_0^{+\infty} t^k (V')^2 dt , \qquad
{\mathcal W} (V) = \int_0^{+\infty} t^k G(V) dt , 
\]
where $G$ is  a primitive function for $F(|s|)$, namely
$ G(v)= \int_0^v F(|s|) ds$.
It is needed to minimize the functional $\mathcal T(V)$ on the set $E$, subject to the constraint ${\mathcal W}(V)=1$.
The assumption \eqref{Fnozero}  assures that the set $\{ V\in E \, :\, {\mathcal W}(V)=1 \}$ is not empty (see \cite[Theorem 1, step 1]{BL-ARMA}). Hence there is a minimizing sequence $V_n$, which clearly satisfies 
\begin{equation}\label{V'bound} \int_0^{+\infty}t^k (V_n')^2 dt \le \const. 
\end{equation}
Following \cite[Theorem 1, step 2]{BL-ARMA}, we split the function $F$ into $F(v)=F_1(v)-F_2(v)$ with the properties
\begin{align*}
\lim\limits_{v\to 0}\frac{F_1(v)}{v}=0 , \quad \lim\limits_{v\to+\infty}\frac{F_1(v)}{v^{p_\a}} =0 , \quad F_2(v) \ge m v \; \mbox{ for } v\ge 0 ,
\end{align*}
where $p_\a=\frac{\n+2+2\alpha}{\n-2}>p$ because $\alpha>\frac{(\n-2)p-\n-2}{2}$.
We also denote by $G_i(v)=\int_0^v F_i(|s|)ds$ the respective primitive functions, in such a way that
\[ {\mathcal W}(V) = \int_0^{+\infty}t^k G_1(V) dt - \int_0^{+\infty}t^k G_2(V)dt ,\]
with
\begin{align*}
G_1(V) \le \const(\varepsilon)|V|^{p_{\alpha}+1} + \varepsilon G_2(V) , \quad G_2(V) \ge  \frac{m}{2} |V|^2 .
\end{align*}
Hence along the minimizing sequence we have
\begin{align*}
\int_0^{+\infty}t^k G_2(V_n)dt = & \int_0^{+\infty}t^k G_1(V_n) dt -1 \\
\le & \const(\varepsilon) \int_0^{+ \infty}t^k|V_n|^{p_{\alpha}+1}dt  + \varepsilon\int_0^{+\infty}t^k G_2(V_n)dt -1,
\end{align*}
which gives (taking $\varepsilon=1/2$)
\begin{align}\label{finqui}
\frac{m}{4}\int\limits_0^{+\infty}t^k V_n^2dt \le \frac{1}{2}\int\limits_0^{+\infty}t^k G_2(V_n)dt \le \const \int\limits_0^{+\infty}t^k|V_n|^{p_{\alpha}+1}dt  .
\end{align}
The difference here is that we can not take advantage of a compact immersion to control the term on the right side, but we can make use of some weighted Sobolev embedding in the spirit of \cite{SWW07}, namely
 \begin{equation}\label{Vqbound}
\int_0^{+\infty}t^k |V|^{p_\a+1} dt \le \const \left( \int_0^{+\infty} t^k (V')^2 dt \right)^{\frac{p_{\alpha}+1}{2 }}
\end{equation}
for all $V\in E$. 
The starting point is a decay property of radial functions, which was pointed out in the Radial Lemma by Ni \cite{Ni}. Specifically, for any $V\in E$ and $r>t>0$ we have
\[ V(t) = V(r) - \int_t^r  V'(s) ds , \]
and then
\begin{align*}
|V(t)| \le & |V(r)| + \int_t^r | V'(s)| ds \le |V(r)| + \left(\int_t^r s^{-k}  ds\right)^{1/2} \left(\int_t^r s^k (V'(s))^2 ds\right)^{1/2} \\
\le & |V(r)| + \frac{1}{\sqrt{k-1}}  \left(\int_0^{+\infty} t^k (V')^2 dt\right)^{\frac{1}{2}}\left(t^{1-k}-r^{1-k}\right)^{\frac{1}{2}}.
\end{align*}
Sending $r\to+\infty$, and remembering that $V(r)$ vanishes we obtain
\begin{align}\label{radiallemma}
\left| V(t)\right|\le \frac{1}{\sqrt{k-1}} \left(\int_0^{+\infty} t^k (V')^2 dt\right)^{\frac{1}{2}} t^{-\frac{k-1}{2}}. 
\end{align}
Next we assume that $V$ has compact support in $[0,+\infty)$ (the thesis follows by a standard density argument). Integrating by parts gives
\[
\int_0^{+\infty}t^k |V|^{p_\a+1} dt = - \frac{p_\a+1}{k+1}\int_0^{+\infty}t^{k+1} |V|^{p_\a-1} V \, V' dt.
\]
So
\begin{align*}
\int_0^{+\infty}t^k |V|^{p_\a+1} dt \le & \const \int_0^{+\infty} t^{k+1}|V|^{p_\a} |V'| dt \le \const \left(\int_0^{+\infty} t^k (V')^2 dt\right)^{\frac{1}{2}} \left(\int_0^{+\infty} t^{k+2}|V|^{2 p_\a} dt\right)^{\frac{1}{2}} \\
= & \const \left(\int_0^{+\infty} t^k (V')^2 dt\right)^{\frac{1}{2}}  \left(\int_0^{+\infty} t^2 |V|^{p_\a-1} \, t^{k}|V|^{ p_\a+1} dt\right)^{\frac{1}{2}}.
\end{align*} 
But by \eqref{radiallemma} and the definition of $p_\a$, $k=k(\alpha)$ we have 
\begin{align*}
t^2 |V|^{p_\a-1} \le & \frac{1}{(k-1)^{\frac{p_\a-1}{2}}} \left(\int_0^{+\infty} t^k (V')^2 dt \right)^{\frac{p_\a-1}{2}}
t^{2-\frac{(k-1)(p_\a-1)}{2}} \\ = & \frac{1}{(k-1)^{\frac{p_\a-1}{2}}} \left(\int_0^{+\infty} t^k (V')^2 dt \right)^{\frac{p_\a-1}{2}}
\end{align*}
and then
\begin{align*}
\int_0^{+\infty}t^k |V|^{p_\a+1} dt \le & \const \left(\int_0^{+\infty} t^k (V')^2 dt \right)^{\frac{p_\a+1}{4}}  \left(\int_0^{+\infty} t^{k}|V|^{ p_\a+1} dt\right)^{\frac{1}{2}},
\end{align*} 
which in turns gives \eqref{Vqbound}.
The uniform bounds \eqref{V'bound}, \eqref{finqui} and \eqref{Vqbound} allow to conclude the arguments of \cite[Theorem 1]{BL-ARMA} and get a solution  of \eqref{1radtrva}, by using a Compactness result by Strauss \cite{Strauss}.

The solution $V$ just produced is clearly positive. Actually, it is nontrivial because of the constraint ${\mathcal W}(V)=1$. So it has to be positive by strong maximum principle. Therefore $V$ actually solves \eqref{1radtr}, and vanishes at infinity.
It is left to check that  $\lim\limits_{t\to 0} V'(t)=0$. This follows as in \cite[Lemma 1]{BL-ARMA}, provided that $V(t)$ has finite limit as $t\to 0$. This is true by Lemma \ref{Vbd0},  because $k(\alpha) <\frac{p+3}{p-1}$ when $\alpha>\frac{(\n-2)p-\n-2}{2}$.

Eventually, uniqueness of positive solutions for \eqref{1radtr} has been proved in \cite{KZ91}, under assumptions \eqref{F2}, \eqref{F3}. 
\end{proof}

Last Lemma inherits the decay at infinity of the radial solutions to 
\begin{equation}\label{eq:zh}
\left\{\begin{array}{ll}
-\Delta \zeta +m|x|^{\alpha}\zeta =  |x|^{\alpha}h(e^{-|x|}) , \qquad & \text{in } \R^{\n} ,\\
\zeta(x)\to 0 , & \text{as } |x|\to\infty,
\end{array}\right.
\end{equation}
where $h$ is any continuous, nondecreasing function with $h(0)=0$, $\lim\limits_{s\to 0} h(s)/s=0$.
It has been used in the proof of Lemma \ref{Tdef}, applied to the function $h(s)=s\,g(s)$.

\begin{lemma}\label{piucheesp}
Let $\zeta$ be a positive radial solution to \eqref{eq:zh}, then  $\zeta$ vanishes at infinity faster than $e^{-|x|}$.
\end{lemma}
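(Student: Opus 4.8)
The plan is to reduce the PDE \eqref{eq:zh} to a radial ODE and then run essentially the same argument that proved Lemma \ref{morethanexpV}, comparing the unknown with a supersolution of exponential type and then bootstrapping. First I would write $\zeta=\zeta(r)$, $r=|x|$, so that $\zeta$ solves $-\zeta''-\frac{\n-1}{r}\zeta'+m r^{\a}\zeta = r^{\a}h(e^{-r})$ for $r>0$, with $\zeta(r)\to0$ as $r\to\infty$. Since $h(s)/s\to0$ as $s\to0$ and $\zeta$ vanishes at infinity, the right-hand side is $o(r^\a e^{-r})$; more precisely, fix $\e\in(0,1)$ and choose $R_\e$ with $h(e^{-r})\le \e\, e^{-r}$ for $r\ge R_\e$. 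The substitution $W(r)=r^{\frac{\n-1}{2}}\zeta(r)$ removes the first-order term and gives $W'' = a(r)W - r^{\frac{\n-1}{2}}r^\a h(e^{-r})$ with $a(r)=\frac{(\n-1)(\n-3)}{4r^2}+m r^\a$. Because $a(r)\to+\infty$ (indeed $a(r)\ge \d^2$ eventually for any fixed $\d$, using $m r^\a\to\infty$), $W$ satisfies a differential inequality of the form $W''\ge \d^2 W - (\text{small exponential forcing})$ for large $r$.

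Next I would exploit this to get a first exponential bound. Arguing exactly as in Lemma \ref{morethanexpV}: set $Z(r)=(\d W+W')e^{-\d r}$ and compute $Z'=(W''-\d^2 W)e^{-\d r}$; the forcing term is handled by noting $r^{\frac{\n-1}{2}+\a}h(e^{-r})e^{-\d r}$ is integrable (in fact goes to $0$ superexponentially relative to $e^{\d r}$ once $\d<1$), so that either $Z$ eventually becomes and stays positive — which, integrated, would force $\zeta$ to grow like $e^{\d r}$, contradicting $\zeta(r)\to 0$ — or $Z(r)\le 0$ for all large $r$. In the latter case $(We^{\d r})'\le$ (something integrable) so $W(r)e^{\d r}$ is bounded, giving $\zeta(r)\le \const\, r^{-\frac{\n-1}{2}}e^{-\d r}$ for every $\d<\sqrt m$... but here $m r^\a$ beats any fixed exponential, so one can in fact take $\d$ as large as one wishes. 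This already yields $\limsup_{r\to\infty} e^{r}\zeta(r)<+\infty$, which is the decay claimed "faster than $e^{-|x|}$"; to make the statement sharp I would then iterate once more, feeding the bound $\zeta(r)\le \const e^{-\d r}$ with $\d>1$ back into the forcing term $r^\a h(e^{-r})$ and re-running the comparison with $Z(r)=(W'+W')e^{-r}$ to conclude $\zeta(r)e^{r}\to 0$, or equivalently that for every $\delta>0$ there is $C_\delta$ with $\zeta(r)\le C_\delta e^{-(1+\delta)} $... more cleanly: $\limsup_{r\to\infty}e^{|x|}\zeta=0$.

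The only genuinely delicate point is making the comparison rigorous near $r=0$ and at the junction: one must know $\zeta$ and $\zeta'$ are well behaved (bounded) on a neighborhood of the origin so that the monotonicity argument for $Z$ on $(R_\e,\infty)$ is not contradicted by boundary contributions, and one must be careful that $W=r^{\frac{\n-1}{2}}\zeta\ge0$ (which holds since $\zeta>0$) is what lets the inequality $W''\ge\d^2W-(\text{forcing})$ be used in the form needed. Regularity at the origin is standard elliptic theory for \eqref{eq:zh} since the right-hand side is bounded there; I would simply invoke this. Thus the main obstacle is bookkeeping rather than a conceptual difficulty: carefully tracking the small forcing term through the two applications of the Bernstein-type comparison of Lemma \ref{morethanexpV}, and observing that since the absorption coefficient $m r^\a$ diverges, the exponent in the decay is not limited by $\sqrt m$ as in Lemma \ref{morethanexpV} but can be pushed past $1$, which is exactly what "faster than $e^{-|x|}$" requires.
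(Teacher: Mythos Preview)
Your Bernstein-type comparison has a genuine gap. The substitution $Z=(\delta W+W')e^{-\delta r}$ uses only $a(r)\ge\delta^2$ and discards the divergence $a(r)\sim m r^{\a}\to\infty$. With the forcing present you get $Z'\ge -g$ with $g(r)=r^{\a+\frac{\n-1}{2}}h(e^{-r})e^{-\delta r}$, hence $Z(r)\le G(r):=\int_r^\infty g$ and $(We^{\delta r})'\le G(r)e^{2\delta r}$. Since $h(e^{-r})=o(e^{-r})$, one has $G(r)e^{2\delta r}=o\!\left(r^{\a+\frac{\n-1}{2}}e^{(\delta-1)r}\right)$, which is \emph{not} integrable once $\delta\ge 1$; integrating yields only $W(r)=o(r^{\a+\frac{\n-1}{2}}e^{-r})$, i.e.\ $\zeta(r)e^{r}=o(r^{\a})$, not $\zeta(r)e^{r}\to 0$. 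Your proposed iteration is vacuous: the forcing $r^{\a}h(e^{-r})$ does not involve $\zeta$, so feeding a bound on $\zeta$ back in changes nothing.

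The paper proceeds quite differently. It first applies the change of variable \eqref{cov}, so that $Z(t)=\zeta(r)$ solves $Z''+\frac{k}{t}Z'-mZ=-h(e^{-(t/\beta)^\beta})$ with \emph{constant} absorption $m$. The homogeneous part is a modified Bessel equation; the paper writes $Z$ by variation of constants in terms of $\jcal_\nu,\kcal_\nu$, and the conclusion $e^{r}\zeta(r)\to 0$ follows from their explicit asymptotics via L'H\^opital, using $h(s)/s\to 0$. If you prefer to stay with a comparison argument, the fix is to exploit the full potential through a direct supersolution: since
\[
-\Delta(e^{-r})+m r^{\a}e^{-r}=e^{-r}\Big(m r^{\a}+\tfrac{\n-1}{r}-1\Big)\ge \tfrac{m}{2}\,r^{\a}e^{-r}
\]
for large $r$, while the right-hand side $r^{\a}h(e^{-r})=o(r^{\a}e^{-r})$, for each $\e>0$ the function $\psi_\e=\frac{\e}{2}e^{-r}+B_\e e^{-2r}$ (with $B_\e$ chosen so that $\psi_\e\ge\zeta$ at a suitably large radius $R_\e$) is a supersolution on $\{r>R_\e\}$, and the maximum principle gives $\zeta(r)e^{r}\le\e$ for $r$ large.
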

\begin{proof}
We perform the change of variable \eqref{cov}, so that $Z(t)=\zeta(x)$  vanishes at infinity and satisfies 
\begin{equation}\label{Zh}
 Z^{\prime\prime} + \dfrac{k}{t} Z^{\prime} - m Z = - h(e^{-(t/\beta)^{\beta}}) ,   \quad t>0 
\end{equation}
for  $\beta=2/(2+\alpha)\in(0,1)$ and $k=(2\n-2+\a)/(2+\a)$.
The thesis is equivalent to 
\[%\begin{equation}\label{piucheespth}
\lim\limits_{t\to+\infty} e^{(t/\beta)^{\beta}}Z(t)  = 0 .
\]%\end{equation}
We begin by studying the homogeneous form of \eqref{Zh}, i.e.
\begin{equation}\label{Z0}
 Z^{\prime\prime} + \dfrac{k}{t} Z^{\prime} - m Z = 0 , \qquad t>0 .
\end{equation}
We set $\nu=(k-1)/2$ and introduce a new variable $U$ in such a way that $Z(t)=t^{-\nu} U(\sqrt{m}\, t)$. By computations $U$ solves a modified Bessel equation
\begin{equation}\label{bessel}
 U^{\prime\prime} + \dfrac{1}{s} U^{\prime} -\left(1+ \dfrac{\nu^2}{s^2}\right) U = 0 , \qquad s>0 .
\end{equation}
A system of fundamental solutions for \eqref{bessel} is given by  two modified Bessel functions 
\begin{align*} 
\jcal_{\nu}(s)=& \left(\frac s 2\right)^{\nu}\sum\limits_{n=0}^{\infty}\dfrac{s^{2n}}{4^nn!\Gamma(n+1+\nu)} , \\
\kcal_{\nu}(s)= & \dfrac{\pi\csc(\pi\nu)}{2} \left(\jcal_{-\nu}(s)-\jcal_{\nu}(s)\right) ,
\end{align*}
 if $\nu$ is not integer, or $\kcal_{\nu}(s)=  \lim\limits_{\mu\to\nu} \kcal_{\mu}(s)$,
if $\nu$ is an integer.
Here $\Gamma$ stands for the usual $\Gamma$-function. Let us recall some useful properties of the modified Bessel functions (see, for instance \cite{book:Korenev}):
\[ \jcal_{\nu}(s) \kcal_{\nu}'(s) -\jcal_{\nu}'(s) \kcal_{\nu}(s) = - 1/s,\]
\begin{align*}
\jcal_{\nu}(s) = &  \dfrac{1}{\Gamma(\nu+1)}\left(\dfrac{s}{2}\right)^{\nu} (1+O(s^2)) ,   \quad \text{ as } s\to 0, 
%\\ \kcal_{\nu}(s)= & \dfrac{\Gamma(\nu)}{2} \left(\dfrac{s}{2}\right)^{-\nu} ( 1+O(s^2)) + O(s^{\nu}) ,  \quad \text{ as } s\to 0,  \nu\notin\N ,
\\ \jcal_{\nu}(s) = & \dfrac{e^{s}}{\sqrt{2\pi s}} \left(1+0\left(\dfrac{1}{s}\right)\right)  , \quad \text{ as } s\to +\infty , 
\\ \kcal_{\nu}(s) = & \sqrt{\dfrac{\pi}{2}} \dfrac{e^{-s}}{\sqrt{s}} \left(1+0\left(\dfrac{1}{s}\right)\right)  ,  \quad \text{ as } s\to +\infty .
\end{align*}
Now, any solution of the homogeneous ODE  is a linear combination of $t^{-\nu}\jcal_{\nu}(\sqrt{m} \,t)$ and $t^{-\nu}\kcal_{\nu}(\sqrt{m} \,t)$, and solutions to the non-homogeneous ODE in \eqref{Zh} can be produced by the method of the variation of constants
\[Z(t) = a(t)\, t^{-\nu} \jcal_{\nu}(\sqrt{m}\, t) + b(t)\, t^{-\nu}\kcal_{\nu}(\sqrt{m}\, t),\]
\[
a'(t)= h(e^{-(t/\beta)^{\beta}}) \,t^{-\nu}\kcal_{\nu}(\sqrt{m}\, t) / W(t) , \quad b'(t)= - h(e^{-(t/\beta)^{\beta}}) \,t^{-\nu}\jcal_{\nu}(\sqrt{m}\, t) / W(t),
\]
where $W(t)$ is the  Wronskian determinant:
\begin{align*}
 W(t)= & t^{-\nu}\jcal_{\nu}(\sqrt{m} \,t)\left(t^{-\nu} \kcal_{\nu}(\sqrt{m}\, t)\right)' - \left(t^{-\nu}\jcal_{\nu}(\sqrt{m} \,t)\right)'t^{-\nu} \kcal_{\nu}(\sqrt{m} \,t)
\\ = & \sqrt{m}\, t^{-2\nu}\left(\jcal_{\nu} \kcal_{\nu}' -\jcal_{\nu}' \kcal_{\nu}\right) = -t^{-2\nu-1} .
\end{align*}
Hence
\[
a'(t)=  - h(e^{-(t/\beta)^{\beta}}) \,t^{\nu+1}\kcal_{\nu}(\sqrt{m}\, t), \quad b'(t)= h(e^{-(t/\beta)^{\beta}}) \,t^{\nu+1}\jcal_{\nu}(\sqrt{m}\, t) .
\]
By the recalled properties of the Bessel functions $a'$ is integrable at infinity (because also $h(e^{-(t/\beta)^{\beta}})$ vanishes), while $b'$ is integrable at $0$.
We thus write
\begin{align*}
Z(t) = & \left(a_o+\int_t^{+\infty}h(e^{-(s/\beta)^{\beta}}) \,s^{\nu+1}\kcal_{\nu}(\sqrt{m}\, s) ds \right) t^{-\nu} \jcal_{\nu}(\sqrt{m}\, t) \\
& +\left( b_o+\int_0^th(e^{-(s/\beta)^{\beta}}) \,s^{\nu+1}\jcal_{\nu}(\sqrt{m}\, s)  ds\right)  t^{-\nu}\kcal_{\nu}(\sqrt{m}\, t).
\end{align*}
We will show later on  that 
\begin{align}
\label{+} 
\lim\limits_{t\to+\infty} e^{(t/\beta)^{\beta}}t^{-\nu} \jcal_{\nu}(\sqrt{m}\, t)\int_t^{+\infty}h(e^{-(s/\beta)^{\beta}}) \,s^{\nu+1}\kcal_{\nu}(\sqrt{m}\, s) ds = & 0 \\
\label{-} 
\lim\limits_{t\to+\infty} e^{(t/\beta)^{\beta}}t^{-\nu} \kcal_{\nu}(\sqrt{m}\, t)\int_0^t h(e^{-(s/\beta)^{\beta}}) \,s^{\nu+1}\jcal_{\nu}(\sqrt{m}\, s) ds = & 0 .
\end{align}
%Since $\beta<1$, also $\lim\limits_{t\to+\infty}  e^{(t/\beta)^{\beta}} t^{-\nu}\kcal_{\nu}(\sqrt{m}\, t) = 0$. 
Hence the condition $\lim\limits_{|x|\to+\infty}\zeta(x)=\lim\limits_{t\to+\infty} Z(t) = 0$ implies that $a_o=0$ and the proof is completed.

To check \eqref{+}, we apply De L'Hopital's theorem and get
\begin{align*}
\lim\limits_{t\to+\infty} e^{(t/\beta)^{\beta}}t^{-\nu} \jcal_{\nu}(\sqrt{m}\, t)\int_t^{+\infty}h(e^{-(s/\beta)^{\beta}}) \,s^{\nu+1}\kcal_{\nu}(\sqrt{m}\, s) ds \\ = 
\lim\limits_{t\to+\infty} \dfrac{\int_t^{+\infty}h(e^{-(s/\beta)^{\beta}}) \,s^{\nu+1}\kcal_{\nu}(\sqrt{m}\, s) ds}{e^{-(t/\beta)^{\beta}}t^{\nu}\left( \jcal_{\nu}(\sqrt{m}\, t)\right)^{-1}}\\ = 
-\lim\limits_{t\to+\infty} \dfrac{h(e^{-(t/\beta)^{\beta}}) \,t^{\nu+1}\kcal_{\nu}(\sqrt{m}\, t) }{
\dfrac{e^{-(t/\beta)^{\beta}}}{\left( \jcal_{\nu}(\sqrt{m}\, t)\right)^2}\left((-{\beta^{1-\beta}}{t^{\nu+\beta-1}} + \nu t^{\nu-1}) \jcal_{\nu}(\sqrt{m}\, t) - \sqrt{m}\, t^{\nu} \jcal_{\nu}'(\sqrt{m}\, t)\right)} \\=
- \lim\limits_{t\to+\infty} h(e^{-(t/\beta)^{\beta}}) e^{(t/\beta)^{\beta}} \dfrac{t \kcal_{\nu}(\sqrt{m}\, t) \jcal_{\nu}(\sqrt{m}\, t)}{-\left(\dfrac{\beta}{t}\right)^{1-\beta} + \dfrac{\nu}{t}  - \sqrt{m} \dfrac{\jcal_{\nu}'(\sqrt{m}\, t)}{\jcal_{\nu}(\sqrt{m}\, t)}}.
\end{align*}
Here, the term $h(e^{-(t/\beta)^{\beta}}) e^{(t/\beta)^{\beta}}$ vanishes by assumption and the numerator of the fraction is bounded. Concerning the denominator, remembering that \[\jcal_{\nu}'(s)= \dfrac{\nu}{s} \jcal_{\nu}(s) + \jcal_{\nu+1}(s)\]  we have 
\begin{align*}
-\left(\dfrac{\beta}{t}\right)^{1-\beta} + \dfrac{\nu}{t}  - \sqrt{m} \dfrac{\jcal_{\nu}'(\sqrt{m}\, t)}{\jcal_{\nu}(\sqrt{m}\, t)}
=-\left(\dfrac{\beta}{t}\right)^{1-\beta} -  \sqrt{m} \dfrac{\jcal_{\nu+1}(\sqrt{m}\, t)}{\jcal_{\nu}(\sqrt{m}\, t)},
\end{align*}
where the first term vanishes and the second one has finite and positive limit because   $\jcal_{\nu+1}$ and $\jcal_{\nu}$ have the same asymptotic behavior at infinity.

Concerning \eqref{-}, we first notice that $\lim\limits_{t\to+\infty}\int_0^t h(e^{-(s/\beta)^{\beta}}) \,s^{\nu+1}\jcal_{\nu}(\sqrt{m}\, s) ds$ exists because the integrand is nonnegative. If the limit is finite, there is nothing to prove. Otherwise, we can apply De L'Hopital's theorem and computations similar to the previous ones give the thesis. Actually
\begin{align*}
\lim\limits_{t\to+\infty} e^{(t/\beta)^{\beta}}t^{-\nu} \kcal_{\nu}(\sqrt{m}\, t)\int_0^t h(e^{-(s/\beta)^{\beta}}) \,s^{\nu+1}\jcal_{\nu}(\sqrt{m}\, s) ds \\ = 
\lim\limits_{t\to+\infty}\dfrac{\int_0^t h(e^{-(s/\beta)^{\beta}}) \,s^{\nu+1}\jcal_{\nu}(\sqrt{m}\, s) ds}{e^{-(t/\beta)^{\beta}}t^{\nu} \left(\kcal_{\nu}(\sqrt{m}\, t)\right)^{-1}} \\ =
\lim\limits_{t\to+\infty}\dfrac{h(e^{-(t/\beta)^{\beta}}) \,t^{\nu+1}\jcal_{\nu}(\sqrt{m}\, t)}{\frac{e^{-(t/\beta)^{\beta}}}{\left(\kcal_{\nu}(\sqrt{m}\, t)\right)^2} \left((-\beta^{1-\beta}t^{\nu+\beta-1} + \nu t^{\nu-1}) \kcal_{\nu}(\sqrt{m}\, t) - \sqrt{m}\, t^{\nu} \kcal_{\nu}'(\sqrt{m}\, t)\right)} \\ =
\lim\limits_{t\to+\infty} h(e^{-(t/\beta)^{\beta}}) e^{(t/\beta)^{\beta}} \dfrac{t\, \jcal_{\nu}(\sqrt{m}\, t) \kcal_{\nu}(\sqrt{m}\, t)}{-\left(\dfrac{\beta}{t}\right)^{1-\beta} + \dfrac{\nu}{t}- \sqrt{m}\dfrac{\kcal_{\nu}'(\sqrt{m}\, t)}{\kcal_{\nu}(\sqrt{m}\, t)} } .
\end{align*}
Here, the term $h(e^{-(t/\beta)^{\beta}}) e^{(t/\beta)^{\beta}}$ vanishes by assumption and the numerator of the fraction is bounded. Concerning the denominator, remembering that \[\kcal_{\nu}'(s)= \dfrac{\nu}{s} \kcal_{\nu}(s) - \kcal_{\nu+1}(s)\]  we have 
\begin{align*}
-\left(\dfrac{\beta}{t}\right)^{1-\beta} + \dfrac{\nu}{t}  - \sqrt{m} \dfrac{\kcal_{\nu}'(\sqrt{m}\, t)}{\kcal_{\nu}(\sqrt{m}\, t)}
=-\left(\dfrac{\beta}{t}\right)^{1-\beta} +  \sqrt{m} \dfrac{\kcal_{\nu+1}(\sqrt{m}\, t)}{\kcal_{\nu}(\sqrt{m}\, t)},
\end{align*}
where the first term vanishes and the second one has finite and positive limit because   $\kcal_{\nu+1}$ and $\kcal_{\nu}$ have the same asymptotic behavior at infinity.
\end{proof}

\end{document}